\definecolor{refkey}{rgb}{1, 0.5, 0}  
\definecolor{labelkey}{rgb}{1,1,1}
\newtheorem{theorem}{Theorem}[section]
\newtheorem{lemma}[theorem]{Lemma}
\newtheorem{definition}[theorem]{Definition}
\def\defeq{\;\dot=\;}
\def\bel{\begin{equation}\label}
\def\eeq{\end{equation}}
\def\bega{\begin{array}}
\def\enda{\end{array}}
\title{Stationary Wave Profiles for Nonlocal Particle Models of Traffic Flow on Rough Roads}
\author{Jereme Chien and Wen Shen\footnote{ Mathematics Department, Pennsylvania State University, University Park, PA 16802,  U.S.A.  Emails: jpc6021@psu.edu and wxs27@psu.edu.}}
\begin{document}

\maketitle

\begin{abstract}
We study a nonlocal particle model describing traffic flow on rough roads. 
In the model, each driver adjusts the speed of the car according to the condition
over an interval in the front, leading to a system of nonlocal ODEs which we 
 refer to  as the FtLs (follow-the-leaders) model. 
Assuming that the road condition is discontinuous, 
we seek stationary wave profiles (see Definition~\ref{def:TWP}) 
for the system of ODEs across this discontinuity. 
We derive a nonlocal delay differential equation with discontinuous coefficient,
satisfied by the profiles, together with conditions on the asymptotic values as $x\to\pm\infty$.
Results on existence, uniqueness, and local stability are established for all cases.
We show that, depending on the case, there might exist a unique profile, 
infinitely many profiles, or no profiles at all. 
The stability result also depends on cases. 
Various numerical simulations are presented. 
Finally, we establish convergence of these profiles to those of a local particle model, 
as well as those of a nonlocal PDE model.
\end{abstract}

\textbf{Keywords:} 
traffic flow, follow-the-leaders, particle model, traveling waves, 
stationary wave profiles, existence and uniqueness, stability, convergence.

\section{Introduction and derivation of the model}
\setcounter{equation}{0}

Mathematical models for traffic flow has always been an active research field.
Commonly used models include the microscopic particle models and 
the macroscopic PDE models. 
See the classical book~\cite{Whitham} for a variety of models. 
Among the particle models, many adopt  the popular Follow-the-Leader (FtL) principle, 
where the behavior of a car depends on the leader/leaders ahead. 
Such models usually give rise to a large system of ODEs.
In the macroscopic models one treats the density of 
car distribution as the main unknown, and obtains nonlinear 
conservation laws where the total mass of cars is conserved.
The micro-macro convergence is of fundamental interests,
and results on first order models can be found in~\cite{MR3356989, HoldenRisebro, HoldenRisebro2, MR3605557, MR3217759, MR3541527, DFFRR2017},
and a recent work~\cite{FS2019} for space dependent flux.

In recent year, increasing research interests have been focused on nonlocal traffic models
and other related nonlocal models.
At the microscopic level, these models
describe that  the behavior of each car depends on the traffic pattern
over an interval of road of certain length. Such models are 
commonly referred to as the Follow-the-Leaders (FtLs) models.
Formally, the corresponding macroscopic models are typically 
conservation laws with nonlocal flux functions. 
When the road condition is uniform, 
various results are achieved on many aspects of the topic. 
Well posedness of the nonlocal conservation laws was obtained in~\cite{BG2016}
with a Lax-Friedrich type numerical approximation, 
and in~\cite{FKG2018} using a Godunov type scheme. 
The micro-macro limit, for suitable road condition, is treated~\cite{DFFR2019}. 
For nonlocal crowd dynamics and pedestrian flow,
 see~\cite{AggarwalGoatin2016, ColomboLecureuxMercier2011, ColomboGaravelloLecureuxMercier2011, ColomboGaravelloLecureuxMercier2012}.
On other models that result in nonlocal  conservation laws, we refer 
to~\cite{ChenChristoforou2007, ColomboMarcelliniRossi2016, DuKammLehoucqParks2012}.
See also results for several space dimensions~\cite{AggarwalColomboGoatin2015},
and other related works in~\cite{CrippaLecureuxMercier2013, Zumbrun1999}.

In this work we propose a nonlocal particle model for traffic flow with rough conditions
in one space dimension.
We now derive the model. 
We assume that all cars have the same length $\ell\in\mathbb{R}^+$,
and let  $z_i (t)$ be the position of the $i$th car at time $t$.  
We order the indices of  the cars such that 
\begin{equation}\label{zi}
z_i(t) \le z_{i+1}(t)-\ell \qquad \forall t\ge 0, \quad \mbox{for every}~i\in\mathbb{Z}.
\end{equation}
For a car with index $i$, we define the local discrete density perceived by the driver, 
depending on its relative position to its leader, 
\begin{equation}\label{defrho}
\rho_i(t)  \; \dot= \; \frac{\ell}{z_{i+1}(t)-z_i(t)}. 
\end{equation}
Note that if $\rho_i= 1$, then the two cars with indices $i$ and $i+1$ are
bumper-to-bumper. 
This physical constraint requires that 
$0 \le \rho_i(t) \le 1$ for all $i\in\mathbb{Z}$ and $t\ge 0$.
However, we remark that this constraint might fail for some models, 
see the discussion in section~\ref{sec:final} for a model that leads to crashing 
when $\rho_i$ becomes larger than 1. 

For a given $t\ge0$, based on the car distribution $\{z_i(t): i\in\mathbb{Z}\}$ 
and its corresponding discrete densities $\{\rho_i(t): i\in\mathbb{Z}\}$, 
we construct a piecewise constant density function as
\begin{equation}\label{eq:rhoell}
\rho^\ell(t,x) \;\dot=\; \rho_i(t) \qquad \mbox{for} ~x \in[z_i(t), z_{i+1}(t)) \quad \mbox{for} \quad 
i\in\mathbb{Z}, \qquad \forall t\ge 0.
\end{equation}

We let $w$ denote a  weight function, 
with support on the interval $x\in[0,h]$ for 
some given $h\in\mathbb{R^+}$.  
We assume that $w$ is bounded and
Lipschitz continuous  on its support and 
satisfies the assumptions
\begin{equation}\label{eq:w}
w(x) \ge 0 ~~ \forall x, \quad \int_0^h w(x)\, dx=1, \quad w(h)=0,\quad \mbox{and}\quad
w'(x)<0  ~\forall x\in[0,h]. 
\end{equation}
Although $w$ has bounded support, we define it on the whole real line. 
Note also  that the assumption in~\eqref{eq:w} implies that 
$w(x)$ is discontinuous at $x=0$, but continuous at $x=h$.

We assume that the road condition is varying, and let the speed limit $V(x)$
represent the condition at location $x$. 
More specifically, we consider rough road condition where $V$ is discontinuous,
and set
\begin{equation}\label{eq:V}
V(x) \defeq \begin{cases} V^- >0 , \qquad& \mbox{if}~x<0, \\ V^+ > 0, & \mbox{if}~x\ge0. \end{cases}
\end{equation}

Let $\phi: [0,1]\mapsto [0,1]$ be a $C^2$ function which  satisfies  the assumptions
\begin{equation}\label{eq:phi}
\phi(1)=0, \quad \phi(0)=1, \quad \mbox{and}
\quad  \phi'(\rho) < 0, \quad \phi''(\rho) \le 0~~ \mbox{for}~\rho\in[0,1].
\end{equation}

In this particle model, 
we assume that the speed of the car with index $i$ depends  on 
an average velocity $v^*$, where the average is taken over an interval 
of length $h$ in front of the car position $z_i$ with weight $w$. 
To be specific,  we let
\begin{equation}\label{FtLs}
 \dot z_i(t)  = v^*(z_i;\rho^\ell(t,\cdot)) , \qquad
 v^*(z_i;\rho^\ell(t,\cdot))\defeq 
  \int_{z_i}^{z_i+h} V(y)\phi(\rho^\ell(t,y)) w(y-z_i)\, dy.
 \end{equation}
The dot notation $\dot z_i$ denotes the time derivative of $z_i$. 
The dependence of $v^*$ on $\rho^\ell$ is nonlocal, through an integration.
Given an initial distribution of car positions $\{z_i(0)\}$, 
the system given by~\eqref{FtLs} 
indicates that the velocity of each car depends on a group of cars in front of it. 
We  refer to~\eqref{FtLs} as the {\em ``follow-the-leaders'' (FtLs) model}.  

This family of countably many ODEs~\eqref{FtLs} can be regarded as a nonlinear 
dynamical system on an infinite dimensional space. 
For example, one could set $y_i(t)=z_i(t)-z_i(0)$ and write the system~\eqref{FtLs}
as an evolution equation on the Banach space of bounded sequences of real numbers 
$y=(y_i)_{i\in\mathbb{Z}}$, with norm $\|y\|=\sup_i|y_i|$. 
For each $i\in\mathbb{Z}$, the right hand side of~\eqref{FtLs} is Lipschitz continuous,
even though $V(\cdot)$ is a discontinuous function.
For a given initial datum, the existence and uniqueness of solutions to this system
follow from the standard theory of evolution equations in Banach spaces,
see for example~\cite{Martin, SellYou}. 

The study on traveling wave profiles is fundamental for flow models. 
In this work we focus our attention on the stationary wave profiles 
for the FtLs model~\eqref{FtLs}, defined as follows.

\begin{definition}\label{def:TWP}
Let $\{z_i(t)\}$ be a solution of the FtLs model~\eqref{FtLs} with initial condition
$\{z_i(0)\}$. 
We say that $P(\cdot)$  is a \textbf{stationary wave  profile} for 
the FtLs model~\eqref{FtLs}  if 
\begin{equation}\label{P1}
P(z_i(t)) = \rho_i(t) =\frac{\ell}{z_{i+1}(t)-z_i(t)}  \qquad \forall i \in\mathbb{Z}, t\ge 0.
\end{equation}
\end{definition}

The equation satisfied by the profile $P$ will be derived in section~\ref{sec:2},
with suitable conditions on the asymptotic values $(\rho^-,\rho^+)$, 
see~\eqref{eq:dPx}-\eqref{eq:asymp}. 
We obtain a delay integro-differential equation with discontinuous coefficient. 
Such equations can be studied by an adapted version of the method of step~\cite{MR0477368,MR0141863}. 

Several recent works on analysis of traveling waves for related models are available.
For example,  when $h\to 0+$ and the weight function $w$ tends to a Dirac delta, and \eqref{FtLs} 
reduces to a local particle model. 
For this local particle model, in the simple case with uniform road condition 
where $V(x)\equiv 1$, the discrete 
traveling wave profiles are studied in~\cite{ShenKarim2017}. 
When $V$ is piecewise constant as in~\eqref{eq:V}, 
the discrete stationary wave profiles are treated in~\cite{ShenDDDE2017}. 
For the current nonlocal models~\eqref{FtLs} with $V(x)\equiv 1$, 
discrete traveling waves for particle models  as well as traveling waves for nonlocal 
conservation laws 
are analyzed in~\cite{RidderShen2018}.
Those traveling waves might be stationary or travel with a constant velocity. 
The existence, uniqueness (up to a horizontal shift), and local stability of the profiles
are established. 
The result for the corresponding nonlocal PDE models with piecewise constant $V$
can be found in~\cite{ShenTR}.

In this paper
we study the case where $V(x)$ is piecewise constant with a jump at $x=0$,
and we focus our attention on the
stationary wave profiles across the discontinuity in $V$.
We study existence, uniqueness, and local stability of the 
stationary wave profiles. 
Depending on the speed limits $(V^-,V^+)$ and asymptotic values
$(\rho^-,\rho^+)$, the results vary.
We show that, depending on the cases, 
(i) there exists a unique profile; (ii) there exist infinitely many profiles; 
or (iii) there exist no profiles at all. 
For the cases where the profiles exist, 
we show that some are time asymptotic solutions for the FtLs model~\eqref{FtLs},
while others are unstable and do not attract any nearby solutions of~\eqref{FtLs}.

\medskip

We also address the topic of various limits for the model~\eqref{FtLs}, 
in connection with traveling waves.
Formally, there are several limits of interests.

\medskip

\noindent\textbf{Limit 1: micro-macro limit to nonlocal conservation law.} 
Let $h$ be fixed and let $\ell \to 0$, 
 the solutions  $\rho^\ell$ of 
  the particle model~\eqref{FtLs} formally converges to  the solution of 
a nonlocal scalar conservation law
with discontinuous flux function
\begin{equation}\label{eq:clawNL}
\rho_t + \left[\rho \, \mathcal{A}(t,x;V,\rho)\right]_x =0, 
\qquad 
\mathcal{A}(t,x;V,\rho) \defeq \int_x^{x+h} V(y) \phi( \rho(t,y)) w(y-x)\; dy.
\end{equation}

\medskip

\noindent\textbf{Limit 2: nonlocal to local particle model.} 
Let $\ell$ be fixed and let $h\to 0$ and $w\to\delta_0$ (a dirac delta function), 
one obtains a local  particle model with 
\begin{equation}\label{eq:FtL}
 \dot z_i (t) = V(z_i) \phi(\rho_i(t)), \qquad \rho_i(t) = \frac{\ell}{z_{i+1}(t)-z_i(t)}
\qquad \forall i\in\mathbb{Z}, t\ge 0.
\end{equation}
In this model, the behavior of the car at $z_i$ depends only on a single  leader 
in front.  This is commonly referred to as the follow-the-leader (FtL) model. 

\medskip

\noindent\textbf{Limit 3: double-limit to local conservation law.} 
Furthermore, if we take the double limits $\ell \to 0$ and $h\to 0$, 
formally the model~\eqref{FtLs} converges to 
a local scalar conservation law with discontinuous flux
\begin{equation}\label{eq:claw}
\rho_t + \left[V(x) \rho \phi(\rho) \right]_x =0.
\end{equation}

\medskip 

Rigorous theoretical results on  the convergence of various limits  
are fundamental questions. 
Several recent results are of great interests, for the simpler case where
the road condition is uniform with $V(x)\equiv 1$. 
The micro-macro limit of a
nonlocal interaction equation with nonlinear mobility and symmetric kernel
is studied in~\cite{DFFR2019}.
For the PDE models, the nonlocal to local limit (i.e.~Limit 3) 
seems to be a complicated issue.
With downstream model, where the support of $w$ is ahead of the driver,
in~\cite{CCS2018} a counterexample is constructed where
the total variation of the density blows up instantly for certain initial data 
in BV.  
Furthermore, when the support of $w$ is around the driver, 
counter examples in~\cite{CCS2019} 
indicate that solutions for the model with nonlocal fluxes fail to converge to
those for the local  models.
See also~\cite{CCS2019P} for the effect of numerical viscosity in the study of this limit.
Convergence results (positive or negative) are still open for 
nonlocal follow-the-leaders models with weight defined on $[0,h]$ 
and $w'<0$. 

For the model~\eqref{FtLs} where the road condition are
discontinuous with $V$ in~\eqref{eq:V},  
and the assumptions on the parameters~\eqref{eq:w} and~\eqref{eq:phi}, 
these convergence results (micro-macro and nonlocal-local) 
are still open.
In this paper, as a first attempt in this direction,  
we establish the convergence for the stationary wave profiles  for Limits 1 and 2. 



The rest of the paper is organized as follows. 
In section 2 we derive the equation satisfied by the profiles, 
and we establish several technical Lemmas.
The case $V^->V^+$ is treated in section 3, and the case $V^-<V^+$ in section 4. 
Each case has 4 subcases, and various results on profiles are proved. 
In section 5 we prove the convergence of the profiles for Limit 1 and Limit 2.
Final concluding remarks are given in section 6, where we also discuss 
an alternative (possibly faulty) FtLs model on rough roads.

\section{Derivation of the profile equation and technical lemmas}
\label{sec:2} 

In this section we first derive the equation satisfied by the profile, 
then we present some technical lemmas and previous results
which will be useful in the analysis later.


\subsection{Derivation of the profile equation}
We  now derive  the equation satisfied by a stationary wave profile. 
Let $t\ge 0$ be given. 
Note that~\eqref{FtLs} can be rewritten as a system of ODEs for the
discrete density functions $\rho_i(\cdot)$, such that
\begin{align}\label{rhodot}
    \dot \rho_i(t) 
=-\frac{\ell \left( \dot z_{i+1} - \dot z_i \right) }{(z_{i+1}-z_i)^2} 
= 
\frac{1}{\ell} \rho_i^2(t) \cdot \big( v^*(z_i; \rho^\ell ) - v^*(z_{i+1}; \rho^\ell)
\big).
\end{align}
Here the piecewise constant function $\rho^\ell$ satisfies
\[
\rho^\ell (t,x) = \rho_i = P(z_i), \qquad \mbox{for}~ x\in[z_i,z_{i+1}), \quad \forall i\in\mathbb{Z}.
\]
Differentiating both sides of~\eqref{P1} in $t$ 
and using~\eqref{FtLs} and~\eqref{rhodot},
one gets
\begin{equation}\label{eq:dP}
P'(z_i) = \frac{\dot \rho_i}{\dot z_i} = 
\frac{P(z_i)^2}{\ell \cdot v^*(z_i;\rho^\ell(t,\cdot))} \Big[v^*(z_i;\rho^\ell(t,\cdot)) - 
v^*(z_{i+1};\rho^\ell(t,\cdot))\Big].
\end{equation}

We now introduce some notations. 
For a given profile $P$, we define an operator
\begin{equation} \label{eq:defL}
L^P(x) \;\dot=\; x + \frac{\ell}{P(x)},
\end{equation}
where $L^P(x)$ is the location of the leader for the car  at $x$. 
We remark that, in the particle model with the local density defined in~\eqref{defrho},
we have $\rho_i >0$ for all $i$.
The only exception is for the leader which could have an empty road in front 
and therefore  zero density, but
in our discussion on the stationary profile this never occurs. 
Thus we have  $P(x)>0$ and~\eqref{eq:defL} is well-defined. 

Furthermore, when the operator $L^P$ is composed with itself multiple times, 
we use the notation
\begin{equation} \label{eq:defLs}
(L^P)^k(x) \;\dot=\; \underbrace{L^P\circ L^P \cdots \circ L^P}_\text{k times}(x),
\qquad k\in\mathbb{Z}^+.
\end{equation}

Given a profile $P$, we define a piecewise constant function $P^\ell_{\{x\}}$ as
\begin{equation}\label{eq:Pell}
P^\ell_{\{x\}}(y)  \defeq P((L^P)^k(x)) 
\qquad \mbox{for} ~ y\in\left[(L^P)^k(x), \;(L^P)^{k+1}(x)\right), 
\quad \forall k \in\mathbb{Z}.
\end{equation}
With these notations, we now define
\begin{equation}\label{eq:v**}
v^*(x;P^\ell_{\{ \tilde x\}})  
\defeq \int_x^{x+h}  V(y)\cdot \phi(P^\ell_{\{\tilde  x\}}(y)) \cdot w(y-x)\; dy . 
\end{equation}
Note that $V$ and $P^\ell_{\{\tilde  x\}}$ are bounded piecewise constant functions 
so $x\mapsto v^*$ is Lipschitz continuous.
Since the  $z_i$ in \eqref{eq:dP} is arbitrarily chosen, we 
can replace it with $x$.  Using the notations introduced above, 
we can rewrite~\eqref{eq:dP} as a
\textit{delay integro-differential equation}
\begin{equation}\label{eq:dPx}
P'(x) = 
\frac{P^2(x)}{\ell \cdot v^*(x;P^\ell_{\{x\}})} \Big[v^*(x;P^\ell_{\{x\}})-
v^*(L^P(x);P^\ell_{\{x\}})\Big],
\end{equation}
where $v^*$ is defined in~\eqref{eq:v**}. 
We seek continuously differentiable 
solutions $P$ of~\eqref{eq:dPx} with the following asymptotic values 
\begin{equation}\label{eq:asymp}
\lim_{x\to -\infty} P(x) = \rho^-,\qquad \lim_{x\to +\infty} P(x) = \rho^+.
\end{equation}
Suitable conditions on $\rho^-,\rho^+$  will be specified later. 
We refer to~\eqref{eq:dPx}-\eqref{eq:asymp} as the \textbf{asymptotic value problem}. 

We remark that the delay in \eqref{eq:dPx} is 
strictly positive, of value larger than $\ell$. 
Fix a value $x_0 \in\mathbb{R}$.  If $P$ 
is given on the half line $x\ge x_0$, the equation \eqref{eq:dPx}
can be solved backward in $x$, as an ``initial value problem''. 
The existence and uniqueness of solutions to this 
initial value problem is a key step for the analysis of 
the asymptotic value problem for the stationary profiles.

\subsection{Technical Lemmas and previous results}

We now  present some technical lemmas and  previous results. 
We define the function
\begin{equation}\label{eq:f}
f(\rho) \;\dot=\;  \rho \phi(\rho), \qquad \rho\in[0,1].
\end{equation}
By the assumptions~\eqref{eq:phi},  we see that  
$f'' <0$ in the domain.
Furthermore, 
we let $\hat\rho$ be the unique value $\hat\rho$ such that 
\begin{equation}\label{eq:stag}
f'(\hat\rho ) =0.
\end{equation}

Next Lemma establishes the {\em ordering property} for the car distribution.

\begin{lemma}\label{lm:1}
Let $P$ be a continuously differentiable 
function that satisfies~\eqref{eq:dPx}, and
assume that  $0<P(x)<1$ for all $x\in\mathbb{R}$. Then 
\begin{equation} \label{eq:a1}
P'(x) < \frac{1}{\ell} P^2(x), \qquad \forall x  \in \mathbb{R}.
\end{equation}
Moreover, for every car at $x$, there exists a unique
follower $x^\flat$ such that $L^P(x^\flat)=x$.  
\end{lemma}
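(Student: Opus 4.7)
The plan is to derive \eqref{eq:a1} directly from the profile equation \eqref{eq:dPx} by exploiting the strict positivity of the weighted velocity functional $v^*$, and then to deduce strict monotonicity of the leader map $L^P$, from which the existence and uniqueness of the follower $x^\flat$ follow.

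First I would verify that $v^*(x; P^\ell_{\{x\}}) > 0$ and $v^*(L^P(x); P^\ell_{\{x\}}) > 0$ for every $x \in \mathbb{R}$. Each is the integral \eqref{eq:v**} of the nonnegative integrand $V(y)\,\phi(P^\ell_{\{x\}}(y))\,w(y-x)$: by \eqref{eq:V}, $V(y) > 0$; by \eqref{eq:w}, $w(y-x) > 0$ on $[x, x+h)$; and since $0 < P^\ell_{\{x\}}(y) < 1$, the conditions $\phi(1)=0$ and $\phi' < 0$ in \eqref{eq:phi} force $\phi(P^\ell_{\{x\}}(y)) > 0$. Thus the integrand is strictly positive on a set of positive measure, so $v^* > 0$. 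Rewriting \eqref{eq:dPx} as
$$
P'(x) \;=\; \frac{P^2(x)}{\ell}\left(1 \;-\; \frac{v^*(L^P(x); P^\ell_{\{x\}})}{v^*(x; P^\ell_{\{x\}})}\right),
$$
the ratio in the parenthesis is strictly positive, so the bracket is strictly less than $1$, which yields \eqref{eq:a1}.

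For the follower, I would differentiate the definition \eqref{eq:defL} to get
$$
(L^P)'(x) \;=\; 1 \;-\; \frac{\ell\, P'(x)}{P^2(x)},
$$
which is strictly positive by \eqref{eq:a1}. Thus $L^P$ is continuous and strictly increasing on $\mathbb{R}$, hence injective, giving uniqueness of $x^\flat$. For existence, $L^P(x) > x+\ell$ (since $P(x) < 1$) forces $L^P(x) \to +\infty$ as $x \to +\infty$, while a positive lower bound on $P$ near $-\infty$, furnished by a positive asymptotic value $\rho^-$ to be prescribed on the profile, forces $L^P(x) \to -\infty$ as $x \to -\infty$; strict monotonicity then yields surjectivity and produces the unique preimage $x^\flat$.

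The main obstacle is really just recognizing that the strict inequality in \eqref{eq:a1} hinges on the strict positivity of $v^*$ on the delay interval, which in turn needs each of the three structural conditions on $V$, $w$, and $\phi$ to be invoked. The only place where structure beyond the stated hypotheses enters is in proving surjectivity of $L^P$ at $-\infty$, where a uniform lower bound on $P$ coming from the asymptotic setup is required.
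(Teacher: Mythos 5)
Your proof is correct and follows essentially the same route as the paper: strict positivity of $v^*$ (from the sign conditions on $V$, $w$, $\phi$) turns \eqref{eq:dPx} into $P'=\frac{P^2}{\ell}\bigl(1-\frac{v^*(L^P(x);P^\ell_{\{x\}})}{v^*(x;P^\ell_{\{x\}})}\bigr)<\frac{P^2}{\ell}$, and then $(L^P)'>0$ gives the unique follower. You are in fact slightly more careful than the paper on the existence (surjectivity) side, where the paper deduces the follower from monotonicity alone and your appeal to a positive lower bound on $P$ near $-\infty$ is the honest way to close that step.
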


\begin{proof}
Since $0<P(x)<1$ for all $x$, we have that $0<\phi(P(x))<1$ and therefore 
$0<v^*(x;P^\ell_{\{x\}})$ for all $x$. 
Then  \eqref{eq:a1} follows immediately from  \eqref{eq:dPx}.

Next, fix an $x\in\mathbb{R}$ and let $x^\flat$ 
be the follower of $x$
such that  $L^P(x^\flat) =x$.
Then, it holds
\[
(L^P)'(x) = 1 - \frac{\ell}{P^2(x)}P'(x) >0 , \qquad \forall x\in\mathbb{R}.
\]
Thus the operator $L^P$ is monotone increasing, and therefore 
there exists a unique follower $x^\flat$. 
\end{proof}

We remark that 
the monotonicity of $x\mapsto L^P$  implies the ordering property, such that
$x<y$ if and only if $L^P(x) < L^P(y)$.

\begin{definition}\label{def:1}
Let $P$ be a continuously differentiable function such that 
\[
0<P(x)<1, \qquad  P'(x) < \frac{1}{\ell} P^2(x),\qquad \forall x\in\mathbb{R}.
\]
We call a sequence of car positions $\{z_i\}$ \textbf{a distribution generated by $P$}, if 
\[
 z_{i+1} = z_i + \frac{\ell}{P(z_i)}, \qquad \forall i\in\mathbb{Z}. 
\]
\end{definition}

Note that each given function $P$ can generate infinitely many distributions of $\{z_i\}$. 
However, if we fix the position of one car, say $z_0$, then the distribution is unique. 

Given a profile $P$ and a distribution $\{z_i\}$ generated by $P$ with 
$z_j=x$ for any fixed index $j$,  
the piecewise constant function $P^\ell_{\{x\}}$ defined in~\eqref{eq:Pell}
satisfies
\begin{equation}\label{eq:Pell2}
P^\ell_{\{x\}}(y) = P(z_i) \quad \mbox{for} ~ y\in[z_i,z_{i+1}) \quad \forall i \in\mathbb{Z},
\qquad z_j=x ~\mbox{for some $j$}.
\end{equation}
In the rest of the paper we denote by $P^\ell_{\{x\}}$ 
the piecewise constant function associated with $P$.

The next Lemma is immediate.

\begin{lemma}\label{lm:2}
Let $P$ be a stationary profile that satisfies \eqref{eq:dPx}, and let $\{z_i(0)\}$
be a distribution generated by $P$.  Let $\{z_i(t)\}$ be the solution of the 
FtLs model \eqref{FtLs} with initial condition $\{z_i(0)\}$. 
Then, $\{z_i(t)\}$ is a distribution generated by $P$ for all $t\ge 0$.
\end{lemma}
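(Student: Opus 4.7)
The plan is to exploit the uniqueness of solutions for the FtLs system on the Banach space mentioned in the introduction, and reduce the claim to constructing \emph{some} solution of \eqref{FtLs} with the prescribed initial datum that remains a distribution generated by $P$ for all $t\ge 0$. The key algebraic input will be the following rewriting of the profile equation \eqref{eq:dPx}: multiplying through by $\ell\, v^*(x;P^\ell_{\{x\}})/P^2(x)$ and recognizing that $(L^P)'(x) = 1 - \ell P'(x)/P^2(x)$ produces the compatibility identity
\begin{equation*}
v^*\bigl(L^P(x); P^\ell_{\{x\}}\bigr) \;=\; (L^P)'(x)\cdot v^*\bigl(x; P^\ell_{\{x\}}\bigr),
\end{equation*}
which is exactly what transfers the FtLs velocity relation from a car to its leader.

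Concretely, I would first take $\tilde z_0(t)$ to be the solution of the scalar ODE $\dot{\tilde z}_0 = v^*(\tilde z_0; P^\ell_{\{\tilde z_0\}})$ with initial datum $\tilde z_0(0) = z_0(0)$, which is well-posed because the right-hand side is Lipschitz in $x$. I would then define recursively $\tilde z_{i+1}(t) = L^P(\tilde z_i(t))$ for $i\ge 0$ and $\tilde z_{i-1}(t) = (L^P)^{-1}(\tilde z_i(t))$ for $i\le 0$; the inverse is available because Lemma~\ref{lm:1} gives strict monotonicity of $L^P$. By construction $\{\tilde z_i(t)\}$ is a distribution generated by $P$ at every $t$, so the piecewise constant density $\tilde\rho^\ell(t,\cdot)$ associated with it equals $P^\ell_{\{\tilde z_j(t)\}}$ for every index $j$.

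Next I would differentiate the identity $\tilde z_{i+1} - \tilde z_i = \ell/P(\tilde z_i)$ in $t$ to obtain $\dot{\tilde z}_{i+1} = (L^P)'(\tilde z_i)\,\dot{\tilde z}_i$, and combine this with the compatibility identity above to conclude by induction, forward and backward from $i=0$, that $\dot{\tilde z}_i(t) = v^*\bigl(\tilde z_i(t); \tilde\rho^\ell(t,\cdot)\bigr)$ for all $i\in\mathbb{Z}$. Thus $\{\tilde z_i(t)\}$ solves \eqref{FtLs}, and since $\{z_i(0)\}$ is itself a distribution generated by $P$ we also have $\tilde z_i(0) = z_i(0)$ for every $i$; uniqueness of solutions to the FtLs evolution then forces $\tilde z_i(t)\equiv z_i(t)$, and the conclusion follows.

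The only mildly delicate point I expect is the base-point independence of $P^\ell_{\{x\}}$ along a single distribution, namely the observation that $P^\ell_{\{x\}} = P^\ell_{\{y\}}$ whenever $x$ and $y$ lie in a common distribution generated by $P$; this is what legitimizes identifying $\tilde\rho^\ell(t,\cdot)$ with $P^\ell_{\{\tilde z_i\}}$ uniformly in $i$ in the inductive step. Once that is noted, the remaining work is routine bookkeeping, which is consistent with the author's claim that the lemma is immediate.
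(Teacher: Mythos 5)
Your proposal is correct, and it is essentially the argument the paper has in mind: the paper states the lemma without proof (``The next Lemma is immediate''), and your construction-plus-uniqueness scheme is the natural way to make that precise. Your compatibility identity $v^*(L^P(x);P^\ell_{\{x\}})=(L^P)'(x)\,v^*(x;P^\ell_{\{x\}})$ is exactly the relation \eqref{eq:a4} that the paper itself derives from \eqref{eq:dPx} in the proof of Lemma~\ref{lm:3}, and the base-point independence $P^\ell_{\{x\}}=P^\ell_{\{y\}}$ along a single distribution is already recorded in \eqref{eq:Pell2}; the remaining ingredients (monotonicity of $L^P$ from Lemma~\ref{lm:1}, Lipschitz continuity of $x\mapsto v^*$, and uniqueness for the FtLs evolution on the Banach space of bounded sequences) are all supplied by the paper, so no gap remains.
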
 

In other words, let $\{\rho_i(t)\}$ be the corresponding discrete density for $\{z_i(t)\}$, 
then $P(z_i(t)) = \rho_i(t)$ for all $t\ge 0$ and for all $i\in\mathbb{Z}$. 


We now define the concept of periodic solutions for the FtLs model.

\begin{definition}\label{def:period}
Let $\{z_i(t): i\in\mathbb{Z}\}$ be the solution of~\eqref{FtLs} with initial condition $\{z_i(0): i\in\mathbb{Z}\}$.
We say that $\{z_i(t): i\in\mathbb{Z}\}$ is \textbf{periodic} if there exists a constant $t_p\in\mathbb{R}^+$,
independent of $i$ and $t$, such that
\begin{equation}\label{PT}
z_i (t+t_p) = z_{i+1}(t), \qquad \forall i\in\mathbb{Z}, ~ \forall t \ge 0.
\end{equation}
We call $t_p$ the \textbf{period}.
\end{definition}

Definition~\ref{def:period} indicates that, in a periodic solution $\{z_i(t): i\in\mathbb{Z}\}$, 
after a time period of $t_p$, each car takes over the position of its leader. 
In the next Lemma we show that this is closely related to stationary wave profiles.

\begin{lemma}\label{lm:3}
(i) Let $P$ be a continuously differentiable function and $0<P(x)<1$ for all $x\in\mathbb{R}$,
and let $P^\ell_{\{x\}}$ be the associated piecewise constant function.
Then, $P$ satisfies \eqref{eq:dPx} if and only if 
\begin{equation}\label{eq:a3}
\int_x^{L^P(x)} \frac{1}{v^*(z;P^\ell_{\{z\}})} dz = C ,  \qquad \forall x \in\mathbb{R},
\end{equation}
for some constant $C\in\mathbb{R}^+$. 

(ii) Moreover, let $\{z_i(t)\}$ be the solution of \eqref{FtLs} with initial data $\{z_i(0)\}$
which is a distribution generated by $P$. 
Then, $\{z_i(t)\}$ is periodic  if and only if $P$ satisfies \eqref{eq:dPx}.
The period $t_p$ equals the constant $C$ in \eqref{eq:a3}.
\end{lemma}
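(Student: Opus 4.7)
My plan is to prove (i) by direct differentiation of the integral in \eqref{eq:a3}, and then derive (ii) from (i) combined with Lemma~\ref{lm:2}.

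For part (i), set $I(x) \defeq \int_x^{L^P(x)} dz/v^*(z;P^\ell_{\{z\}})$. The key preliminary observation is that $P^\ell_{\{L^P(x)\}} = P^\ell_{\{x\}}$, because forward and backward iteration of $L^P$ starting from either $x$ or $L^P(x)$ produces the same bi-infinite partition and hence, via \eqref{eq:Pell}, the same piecewise-constant function. Combined with $(L^P)'(x) = 1 - \ell P'(x)/P^2(x)$ from the proof of Lemma~\ref{lm:1}, differentiation yields
\[
I'(x) = \frac{(L^P)'(x)}{v^*(L^P(x);P^\ell_{\{x\}})} - \frac{1}{v^*(x;P^\ell_{\{x\}})}.
\]
After clearing denominators and isolating $P'(x)$, the condition $I'(x) = 0$ becomes algebraically equivalent to the profile equation \eqref{eq:dPx}; positivity of the integrand gives $C > 0$.

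For the forward direction of (ii), assume \eqref{eq:dPx}. Lemma~\ref{lm:2} then gives $\rho_i(t) = P(z_i(t))$ at every $t \ge 0$, so the density profile sampled by any car at position $z$ is $P^\ell_{\{z\}}$ and the FtLs equation reads $\dot z_i = v^*(z_i;P^\ell_{\{z_i\}})$. Separating variables, the travel time of car $i$ from $z_i(0)$ to $z_{i+1}(0) = L^P(z_i(0))$ equals $\int_{z_i(0)}^{L^P(z_i(0))} dz/v^*(z;P^\ell_{\{z\}}) = C$ by part (i), independent of $i$, so $\{z_i(t)\}$ is periodic with $t_p = C$.

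For the reverse direction, suppose $\{z_i(t)\}$ is periodic with period $t_p$. Writing $Z := z_0$, periodicity forces $z_i(t) = Z(t + i t_p)$, and $P$-generation of the initial data gives $Z((i+1) t_p) = L^P(Z(i t_p))$. Introduce the dynamically-defined profile $Q(z) \defeq \ell/(Z(Z^{-1}(z) + t_p) - z)$; a short check shows that when any car is at position $z$ the density profile ahead is $Q^\ell_{\{z\}}$, so $\dot Z = v^*(Z; Q^\ell_{\{Z\}})$. Combining this with $Z(t + t_p) = Z(t) + \ell/Q(Z(t))$ and differentiating shows that $Q$ satisfies the profile equation \eqref{eq:dPx} at every point in the range of $Z$, i.e.\ on all of $\mathbb{R}$. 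Moreover, $Q(z_i(0)) = \ell/(z_{i+1}(0) - z_i(0)) = P(z_i(0))$ for each $i$, so $P$ and $Q$ agree on the initial orbit. The principal obstacle I anticipate is upgrading this discrete coincidence to $P \equiv Q$ on all of $\mathbb{R}$; the plan is to use the freedom in the starting position $z_0(0)$ — which sweeps the orbits across $\mathbb{R}$ — together with continuity of $P$ and $Q$ to conclude $P = Q$ everywhere, whence $P$ satisfies \eqref{eq:dPx} and $C = t_p$ by (i).
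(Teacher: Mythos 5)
Your treatment of part (i) and of the forward implication in part (ii) is essentially the paper's own proof: differentiate the integral in \eqref{eq:a3} using $(L^P)'(x)=1-\ell P'(x)/P^2(x)$ to get the equivalence with \eqref{eq:dPx}, and for (ii) use Lemma~\ref{lm:2} together with separability of the autonomous equation $\dot z_i=v^*(z_i;P^\ell_{\{z_i\}})$ to identify the travel time of car $i$ with the integral in \eqref{eq:a3}. Your preliminary observation that $P^\ell_{\{L^P(x)\}}=P^\ell_{\{x\}}$ (both base points generate the same bi-infinite orbit in \eqref{eq:Pell}) is exactly the fact the paper uses silently when it writes both $v^*$ terms with the subscript $\{x\}$; making it explicit is a genuine improvement.

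The reverse implication in (ii) is where you diverge from the paper and where your proposal has a real gap. The paper dismisses this direction with one sentence; you instead construct the dynamically defined profile $Q(z)=\ell/(Z(Z^{-1}(z)+t_p)-z)$ and argue it satisfies \eqref{eq:dPx} along the trajectory --- that part is sound, since $Z(s+t_p)$ is the leader's position and the derivation of \eqref{eq:dPx} applies verbatim to $Q$. The trouble is the final step you flag yourself: $P$ and $Q$ are known to agree only on the countable set $\{z_i(0)\}$, and your proposed remedy --- varying the starting position $z_0(0)$ so that the orbits sweep out $\mathbb{R}$ --- is not available under the hypotheses, because periodicity is assumed only for the one solution issuing from the one given distribution; nothing guarantees that solutions starting from other distributions generated by $P$ are also periodic. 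Moreover, the gap cannot be closed as the statement is literally written: the dynamics of that single solution depend on $P$ only through the countably many values $P\bigl((L^P)^k(z_0(0))\bigr)$, so a continuous modification of $P$ away from that orbit (keeping $0<P<1$) leaves the solution and its periodicity untouched while generically destroying \eqref{eq:dPx} at the modified points. The honest conclusion of your construction is that the ``only if'' half holds for the profile $Q$ (equivalently, for the values of $P$ restricted to the orbit), which is all the paper ever uses; either weaken the conclusion to that, or add the hypothesis that every distribution generated by $P$ yields a periodic solution, and your argument goes through.
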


\begin{proof}
\textbf{(i)} We assume that \eqref{eq:a3} holds. Differentiating \eqref{eq:a3} in $x$ on both 
sides, we get
\begin{equation}\label{eq:a4}
(L^P)'(x) \frac{1}{v^*(L^P(x);P^\ell_{\{x\}})} - \frac{1}{v^*(x;P^\ell_{\{x\}})} =0 .
\end{equation}
Using $(L^P)'(x)= 1-\ell P'(x) / P^2(x)$ and $0<P(x)<1$, 
we easily deduce \eqref{eq:dPx}.

Now assume that \eqref{eq:dPx} holds, which implies \eqref{eq:a4}, and further implies \eqref{eq:a3}.  Therefore, \eqref{eq:a3} and \eqref{eq:dPx} are equivalent.

\textbf{(ii).} 
Assume that $P$ satisfies \eqref{eq:dPx}, and let $\{z_i(t)\}$ be the solution of 
\eqref{FtLs} with initial data $\{z_i(0)\}$ generated by $P$. 
Fix any time $t\ge 0$, and an index $i\in\mathbb{Z}$. 
Since $v^*(z_i;P^\ell_{\{z_i\}})$ does not depend on $t$ explicitly, 
\eqref{FtLs}  is separable. 
Let $t_{p,i}$ be the time it takes for a car at $z_i$ to reach it leader's position 
$z_{i+1}= z_i+\ell/P^\ell(z_i)$, we compute
\[
 t_{p,i}
= \int_t^{t+t_{p,i}} dt =
\int_{z_i}^{z_i + \ell/P^\ell(z_i)} \frac{1}{v^*(z_i;P^\ell_{\{z_i\}}) } d z_i .
\]
By \eqref{eq:a3} we conclude that $t_{p,i}=C=t_p$ is constant, therefore $\{z_i(t)\}$
is periodic.
The reverse implication is easily proved by reversing the above argument. 
\end{proof}


We are interested in the asymptotic value problem of~\eqref{eq:dPx} 
with the asymptotic conditions~\eqref{eq:asymp}.  
Defining $z^\flat$ as
\begin{equation}\label{eq:zflat}
L^P(z^\flat)  =-h,
\end{equation}
we observe that on $x\in(-\infty,z^\flat)\cup (0,\infty)$ 
the equation~\eqref{eq:dPx} for $P$ is the same as the 
one for the stationary profile for the FtLs model with $V(x)\equiv 1$,
studied in~\cite{RidderShen2018}.  
We denote by $W$ the profiles studied in~\cite{RidderShen2018}. 
We further recall that in~\cite{RidderShen2018}
the existence, uniqueness (up to a horizontal shift) and local
stability of the stationary profiles $W$ were established. 

Thanks to the follow-the-leaders principle, 
we conclude that  on $x\ge 0$, 
the profiles $P$ (if they exist) must match $W$. 
Furthermore, similar results are valid on asymptotic behaviors as $x\to\pm\infty$,
for $P$ and for $W$. 
We recall the following result from~\cite{RidderShen2018}*{Lemma 2.2}.

\begin{lemma}\label{lm:RS}
Let $\rho^-,\rho^+\in\mathbb{R}^+$ be given and assume $\rho^\pm \in (0,1)$.
Assume that $P$ is a solution of~\eqref{eq:dPx}, 
continuously differentiable on $x>0$ and 
$x<z^\flat$,
satisfying the asymptotic conditions~\eqref{eq:asymp}.
Then, the following holds.
\begin{itemize}
\item 
As $x\to+\infty$, $P(x)$ approaches $\rho^+$ with an exponential rate 
if and only if $\rho^+ > \hat \rho$. 
\item 
As $x\to-\infty$, $P(x)$ approaches $\rho^-$ with an exponential rate 
if and only if $\rho^-< \hat \rho$. 
\end{itemize}
\end{lemma}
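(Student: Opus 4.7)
The plan is to linearize the delay integro-differential equation~\eqref{eq:dPx} about the constant states $\rho^\pm$ and study the characteristic equation governing exponential perturbations. Since the two bullets are symmetric (reflecting $x\mapsto -x$ swaps the roles of $\rho^+$ and $\rho^-$), I will focus on the $x\to+\infty$ statement; on $x>0$ the coefficient $V\equiv V^+$ is constant, so \eqref{eq:dPx} is autonomous there and coincides with the equation treated in~\cite{RidderShen2018}.

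Setting $P(x) = \rho^+ + \epsilon(x)$ and Taylor-expanding $v^*(\,\cdot\,;P^\ell_{\{\cdot\}})$ in $\epsilon$, the constant-profile contributions cancel because $v^*(x;\rho^+) = v^*(L^P(x);\rho^+) = V^+\phi(\rho^+)$, leaving a linear delay equation of the form
\[
\epsilon'(x) \;=\; \frac{(\rho^+)^2\,\phi'(\rho^+)}{\ell\,\phi(\rho^+)}\,\bigl[\,\mathcal{K}_x\epsilon \;-\; \mathcal{K}_{L^P(x)}\epsilon\,\bigr] \;+\; o(\epsilon),
\]
where $\mathcal{K}_x\epsilon \defeq \int_x^{x+h} \epsilon^\ell_{\{x\}}(y)\, w(y-x)\, dy$ averages the cell values $\epsilon(z_i)$ against $w$ along the distribution generated by $P$. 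Substituting the ansatz $\epsilon(x) = Ae^{-\lambda x}$ with $\lambda>0$ and using $L^P(x)\to x+\ell/\rho^+$ at the unperturbed spacing, a short calculation reduces the linearization to a single transcendental characteristic equation $F(\lambda;\rho^+)=0$ of the schematic shape $\lambda = c(\lambda)\bigl(1-e^{-\lambda\ell/\rho^+}\bigr)$ with $c(0) = -(\rho^+)^2\phi'(\rho^+)/\bigl(\ell\phi(\rho^+)\bigr) > 0$.

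The heart of the argument is then the analysis of $F$. One checks $F(0;\rho^+) = 0$, and using $\int_0^h w = 1$ computes
\[
\partial_\lambda F(0;\rho^+) \;=\; \frac{1}{\phi(\rho^+)}\bigl(-\rho^+\phi'(\rho^+) - \phi(\rho^+)\bigr) \;=\; -\frac{f'(\rho^+)}{\phi(\rho^+)}.
\]
Since $w\ge 0$ and the exponential ansatz is convex in $\lambda$, $\lambda\mapsto F(\lambda;\rho^+)$ is strictly concave on $[0,\infty)$, so a positive root $\lambda^\star>0$ exists if and only if $\partial_\lambda F(0;\rho^+) > 0$, i.e.\ $f'(\rho^+) < 0$. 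The strict concavity of $f$ together with $f'(\hat\rho)=0$ then yields the equivalence $f'(\rho^+) < 0 \Leftrightarrow \rho^+ > \hat\rho$. Existence of $\lambda^\star$ produces an exponentially decaying tail via a standard stable-manifold construction at the hyperbolic fixed point, while its absence rules out exponential decay by a Laplace-transform / spectral argument applied to the linearization. The mirror analysis at $x\to-\infty$, with ansatz $e^{\lambda x}$ and $\lambda>0$, gives the reflected condition $f'(\rho^-)>0$, i.e.\ $\rho^- < \hat\rho$.

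The main obstacle is to pass rigorously from the linear characteristic equation to exponential decay of the nonlinear profile $P$: the delay $\ell/P(x)$ in~\eqref{eq:dPx} itself depends on $P$, so the nonlinear remainder must be controlled in a weighted $L^\infty$ norm that decays at rate $\lambda^\star$. The cleanest route, adopted in~\cite{RidderShen2018}, is to view \eqref{eq:dPx} as a discrete dynamical system on the sequence $\{\epsilon(z_i)\}_{i\in\mathbb{Z}}$ along a distribution generated by $P$; the fixed point $\rho^+$ is hyperbolic precisely when a positive $\lambda^\star$ exists, and invariant manifold theory for hyperbolic fixed points closes both the ``if'' and the ``only if'' directions uniformly.
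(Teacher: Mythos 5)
The paper does not actually prove Lemma~\ref{lm:RS}: it is recalled verbatim from \cite{RidderShen2018} (Lemma 2.2 there), where the analysis is carried out for the uniform road $V\equiv 1$; the present paper only invokes it on the half-lines $x>0$ and $x<z^\flat$, where $V$ is constant and \eqref{eq:dPx} is autonomous. So there is no in-paper argument to compare against line by line. Your linearization-plus-characteristic-equation sketch is a faithful reconstruction of the mechanism in the cited reference, and the key computation --- $\partial_\lambda F(0;\rho^+)=-f'(\rho^+)/\phi(\rho^+)$, so that a positive decay rate bifurcates from $\lambda=0$ exactly when $f'(\rho^+)<0$, i.e.\ $\rho^+>\hat\rho$ --- is the correct source of the dichotomy in both bullets, with the mirror-image sign at $x\to-\infty$.

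Two steps are asserted rather than proved and would need work for this to stand alone. First, the strict concavity of $\lambda\mapsto F(\lambda;\rho^+)$ does not follow from ``the exponential ansatz is convex in $\lambda$'': $F$ is, schematically, a product of a discrete Laplace-type transform of $w$ along the cell decomposition with $\bigl(1-e^{-\lambda\ell/\rho^+}\bigr)$, minus $\lambda$, and such products need not be concave. Without some global control of $F$, the sign of $\partial_\lambda F(0)$ alone does not exclude positive roots in the case $\rho^+<\hat\rho$, which is precisely what the ``only if'' direction requires. Second, the passage from the linear characteristic root to the nonlinear profile (in both directions) is delegated to ``standard'' stable-manifold and spectral arguments; for an equation whose delay $\ell/P(x)$ is itself state-dependent this is the genuinely technical part, and you identify it but do not supply it. Since the paper itself outsources the entire proof to \cite{RidderShen2018}, your proposal is an acceptable account of the underlying idea, but it is a sketch of that reference's argument rather than a complete proof.
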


Lemma~\ref{lm:RS} implies that, if $\rho^-$ and $\rho^+$ are 
stable asymptotic values
at $x\to -\infty$  and $x\to +\infty$ respectively, 
for monotone profiles, then we must have
$\rho^- < \hat \rho < \rho^+$.
In the sequel  we say that $\rho^-$ (and $\rho^+$ respectively) is
a \textbf{stable asymptote}  if $\rho^-<\hat\rho$ 
(and $\rho^+>\hat\rho$ respectively). 
On the other hand, we say that $\rho^-$ (and  $\rho^+$ respectively)  is
an \textbf{unstable asymptote}  if $\rho^->\hat\rho$ 
(and  $\rho^+<\hat\rho$ respectively). 
Furthermore, combined with the periodic behavior in Lemma~\ref{lm:3}, 
we immediately have the next Lemma,  which establishes
properties on the flux and density values at $x\to\pm\infty$.

\begin{lemma}\label{lm:ASF}
Assume that $P$ is a piecewise continuously differentiable function
that satisfies~\eqref{eq:dPx}, and let $\rho^-,\rho^+$ be 
asymptotic values 
such that~\eqref{eq:asymp} holds. 
Then there exists a value $\bar f \in\mathbb{R}^+$ such that 
\begin{equation} \label{eq:cc}
V^- \cdot f(\rho^-) = V^+ \cdot f(\rho^+) = \bar f.
\end{equation}
Let $\{z_i(0)\}$ be a distribution generated by this $P$, and 
let $\{z_i(t)\}$ be the solution of~\eqref{FtLs} with initial condition $\{z_i(0)\}$.
Then  $\{z_i(t)\}$ is periodic with period  $t_p = \ell/\bar f$, i.e.,
\begin{equation}\label{eq:pp}
t_p = \int_x^{L^P(x)} \frac{1}{v^*(z; P^\ell_{\{z\}}) } \; dz =\frac{\ell}{\bar f} , \qquad \forall x\in\mathbb{R}. 
\end{equation}
\end{lemma}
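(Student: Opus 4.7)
The plan is to use Lemma~\ref{lm:3} as the main engine. Part (i) of that lemma guarantees, for $P$ satisfying \eqref{eq:dPx}, the existence of a positive constant $C$ with
\[
\int_x^{L^P(x)} \frac{1}{v^*(z;P^\ell_{\{z\}})}\,dz \;=\; C \qquad \forall x\in\mathbb{R},
\]
and part (ii) identifies this $C$ with the period $t_p$ of the associated particle solution $\{z_i(t)\}$. The whole task therefore reduces to computing $C$ by evaluating the left-hand side in the two limits $x\to\pm\infty$ and showing that the two resulting values force the single identity \eqref{eq:cc}, simultaneously yielding $t_p=\ell/\bar f$ and hence \eqref{eq:pp}.

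Consider first the limit $x\to+\infty$. Since $V(y)=V^+$ for all $y\ge 0$, once $x$ is large enough the value $v^*(z;P^\ell_{\{z\}})$ at any $z\in[x,L^P(x)]$ is built from an integral over $[z,z+h]$ on which $V\equiv V^+$. By \eqref{eq:asymp} and the continuity of $P$, for every $\ve>0$ there is $M>0$ with $|P(y)-\rho^+|<\ve$ whenever $y>M$; through the construction \eqref{eq:Pell2} the piecewise constant function $P^\ell_{\{z\}}$ then satisfies $|P^\ell_{\{z\}}(y)-\rho^+|<\ve$ uniformly on the window $[z,z+h]$ as soon as $z$ is large enough. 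Using the continuity of $\phi$ and the normalization $\int_0^h w\,dx=1$, this gives
\[
v^*(z;P^\ell_{\{z\}}) \;\longrightarrow\; V^+\,\phi(\rho^+) \qquad \text{as } z\to+\infty,
\]
and the convergence is uniform on the integration window. Combined with $L^P(x)-x=\ell/P(x)\to\ell/\rho^+$, passing to the limit yields
\[
C \;=\; \lim_{x\to+\infty}\int_x^{L^P(x)}\frac{dz}{v^*(z;P^\ell_{\{z\}})} \;=\; \frac{\ell/\rho^+}{V^+\phi(\rho^+)} \;=\; \frac{\ell}{V^+ f(\rho^+)}.
\]
A symmetric argument as $x\to-\infty$, using that $V\equiv V^-$ on $[z,z+h]$ once $z+h<0$, produces $C=\ell/(V^- f(\rho^-))$.

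Equating the two expressions for the single constant $C$ gives $V^-f(\rho^-)=V^+f(\rho^+)$; calling this common value $\bar f$ delivers \eqref{eq:cc}, and the period formula $t_p=C=\ell/\bar f$ is exactly \eqref{eq:pp}. The only delicate technical point is legitimizing the passage to the limit inside the integral: the integration interval has length $\ell/P(x)$, bounded and uniformly close to $\ell/\rho^\pm$, and no singularity is encountered since $v^*$ stays bounded below by a positive constant near the asymptotes thanks to $\rho^\pm\in(0,1)$ and $\phi(\rho^\pm)>0$. With these facts the integrand converges uniformly on the shrinking window, making the interchange of limit and integral standard; this is the main obstacle and it is essentially routine.
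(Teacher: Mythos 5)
Your proposal is correct and follows exactly the route the paper intends: the paper states this lemma without a written proof, declaring it an immediate consequence of Lemma~\ref{lm:3}, and your argument (identify the constant $C=t_p$ from Lemma~\ref{lm:3}, then evaluate the integral in the limits $x\to\pm\infty$ to get $C=\ell/(V^\pm f(\rho^\pm))$, forcing \eqref{eq:cc} and \eqref{eq:pp}) is precisely the computation being left implicit. The only caveat is that your final paragraph invokes $\rho^\pm\in(0,1)$ to bound $v^*$ away from zero, a hypothesis not literally in the lemma statement but consistent with the nontrivial cases $\bar f>0$ treated throughout the paper.
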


For~\eqref{eq:dPx}-\eqref{eq:asymp}
we will show later that, for some cases  there exist infinitely many stationary profiles.
If this happens, the next Lemma shows that these profiles never cross each other.

\begin{lemma}\label{lm:order}
Let $P_1$ and $P_2$ be two distinct profiles that satisfy~\eqref{eq:dPx}-\eqref{eq:asymp}. 
Then 
the graphs of $P_1$ and $P_2$ never cross each other. 
\end{lemma}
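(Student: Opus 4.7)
The plan is to argue by contradiction. Suppose $P_1, P_2$ are distinct profiles satisfying~\eqref{eq:dPx}--\eqref{eq:asymp} with common asymptotes $\rho^\pm$, and suppose their graphs cross at some $\bar x \in \mathbb{R}$; that is, $P_1(\bar x) = P_2(\bar x) =: \rho_0$, and $P_1 - P_2$ changes sign at $\bar x$. After relabeling if needed, pick $\delta > 0$ such that $P_1(z) > P_2(z)$ for $z \in (\bar x, \bar x + \delta)$.

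First I would exploit the common-period property. Since the two profiles have the same asymptotes $\rho^\pm$, Lemma~\ref{lm:ASF} yields that both correspond to periodic FtLs solutions with the same period $t_p = \ell/\bar f$, where $\bar f = V^- f(\rho^-) = V^+ f(\rho^+)$. Because $P_1(\bar x) = P_2(\bar x) = \rho_0$, we also have $L^{P_1}(\bar x) = L^{P_2}(\bar x) = \bar x + \ell/\rho_0 =: x_1$. Applying Lemma~\ref{lm:3}(i) to each profile at $x = \bar x$ and subtracting gives
\[
\int_{\bar x}^{x_1} \left[\frac{1}{v^*(z; (P_1)^\ell_{\{z\}})} - \frac{1}{v^*(z; (P_2)^\ell_{\{z\}})}\right] dz \;=\; 0.
\]

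The core of the proof is to show the integrand above has a definite sign on a set of positive measure in $[\bar x, x_1]$, producing a contradiction. For $z$ just to the right of $\bar x$, the step function $(P_i)^\ell_{\{z\}}$ takes the value $P_i(z)$ on its initial piece $[z, L^{P_i}(z))$. Since $P_1(z) > P_2(z)$ there, and $\phi$ is strictly decreasing with $w > 0$ on $[0,h)$ by assumption~\eqref{eq:w}, the contribution to $v^*(z; (P_i)^\ell_{\{z\}})$ from this first piece is strictly smaller for $P_1$ than for $P_2$. I would then propagate the ordering $P_1 > P_2$ to subsequent pieces $[(L^{P_i})^k(z), (L^{P_i})^{k+1}(z))$ using the monotonicity $(L^P)' > 0$ from Lemma~\ref{lm:1}. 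Once $v^*(z; (P_1)^\ell_{\{z\}}) < v^*(z; (P_2)^\ell_{\{z\}})$ is established on a subinterval of $[\bar x, x_1]$ of positive length, the displayed integral is strictly positive, contradicting equality to zero.

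The main obstacle is the recursive coupling between $P_1$ and $P_2$ via the iterated leader maps $(L^{P_i})^k$: the breakpoints of the two piecewise constant functions $(P_i)^\ell_{\{z\}}$ on $[z, z+h]$ generally differ, so comparing the weighted averages $v^*$ is delicate. I expect this is handled by using the continuity of $P_1, P_2$ together with the strict inequality $P_1 > P_2$ on $(\bar x, \bar x + \delta)$ to localize to a neighborhood of $\bar x$ on which the dominant contribution to $v^*(z; (P_i)^\ell_{\{z\}})$ comes from pieces where $P_1 > P_2$ persists. An alternative, cleaner route is to consider the two FtLs car distributions generated by $P_1, P_2$ anchored at $\bar x$ (via Lemma~\ref{lm:2}), exploit $z^1_1(0) = z^2_1(0) = x_1$, and use uniqueness for the FtLs ODE system together with periodicity to force $P_1 \equiv P_2$, contradicting distinctness.
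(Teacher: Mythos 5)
Your high-level strategy is the same as the paper's: assume a crossing, use Lemma~\ref{lm:3}(i) and Lemma~\ref{lm:ASF} to equate the two period integrals over $[\bar x, L^{P_1}(\bar x)]=[\bar x, L^{P_2}(\bar x)]$, and derive a contradiction from a sign comparison of the integrands. But there is a genuine gap in how you set up that comparison, and it is precisely the obstacle you flag without resolving. You choose an arbitrary crossing point and obtain $P_1>P_2$ only on a short interval $(\bar x,\bar x+\delta)$. That is not enough: the integrand $1/v^*(z;(P_i)^\ell_{\{z\}})$ at a point $z$ depends on $P_i$ over the whole window $[z,z+h]$, the first constant piece of $(P_i)^\ell_{\{z\}}$ already has length $\ell/P_i(z)\ge\ell$, and the integral runs over $[\bar x,\bar x+\ell/\rho_0]$. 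All of these scales are fixed independently of $\delta$, so on most of the relevant region the sign of $P_1-P_2$ is uncontrolled and the integrand can change sign; "localizing near $\bar x$" cannot make the contribution from $(\bar x,\bar x+\delta)$ dominant. Your appeal to $(L^P)'>0$ to "propagate the ordering to subsequent pieces" is a non sequitur: monotonicity of each leader map says nothing about the relative order of $P_1$ and $P_2$ to the right of $\bar x+\delta$. The paper's proof avoids all of this with one choice you are missing: take $y$ to be the \emph{rightmost} crossing point, so that $P_1>P_2$ on the entire half-line $x>y$. Then every sample point entering $v^*(z;\cdot)$ for $z\in[y,y^\sharp]$ lies in the region where the ordering holds, the comparison $t_{p,1}>t_{p,2}$ goes through globally, and the contradiction with the common period $\ell/\bar f$ is immediate.

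Your proposed alternative route also does not close the gap. Anchoring the two car distributions generated by $P_1$ and $P_2$ at $\bar x$ only forces the two distributions to agree at the two positions $\bar x$ and $\bar x+\ell/\rho_0$; all other car positions are generated by iterating the two different leader maps $L^{P_1}$ and $L^{P_2}$ and will generally differ. The two FtLs solutions therefore have different initial data, and uniqueness for the ODE system gives no information, so it cannot force $P_1\equiv P_2$.
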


\begin{proof}
We prove by contradiction. 
Assume the opposite, that the graphs of $P_1$ and $P_2$ 
cross each other, and
let $y$ be the rightmost crossing point.
Without loss of generality we assume that
\begin{equation} \label{eq:ss}
P_1(y)=P_2(y), \qquad P_1(x) > P_2(x) \quad \forall x>y. 
\end{equation}
Let  $y^\sharp \doteq L^{P_1}(y) = L^{P_2}(y)$ 
be the position of the leader for the car at $y$, 
in both car distributions generated by $P_1$ and $P_2$,
and let $t_{p,1}$ and $t_{p,2}$ be the periods
\[
t_{p,1}  = \int_y^{y^\sharp} \frac{1}{v^*(x;P_{1,\{x\}}^\ell)}dx, \qquad 
t_{p,2}  = \int_y^{y^\sharp} \frac{1}{v^*(x;P_{2,\{x\}}^\ell)}dx.
\]
By our assumptions~\eqref{eq:ss} we have $t_{p,1}>t_{p,2}$. 
But since $P_1,P_2$ have the same asymptotic values, 
by Lemma~\ref{lm:3} and Lemma~\ref{lm:ASF} 
we have that $t_{p,1} =t_{p,2} $, a contradiction. 
\end{proof}

%

\section{Case 1: $V^->V^+$} 
\label{sec:case1}
\setcounter{equation}{0}
In this section we study the case $V^->V^+$, i.e., the speed limit 
has a downward jump at $x=0$. 
We prove results related to existence, uniqueness and stability of the
stationary profiles.
To simplify the notations, we introduce the functions
\begin{equation} \label{eq:}
f^-(\rho) \doteq V^- \rho \phi(\rho) = V^- f(\rho), \qquad 
f^+(\rho) \doteq V^+ \rho \phi(\rho) = V^+ f(\rho), \qquad  \rho\in[0,1].
\end{equation}

By Lemma~\ref{lm:ASF},  the asymptotic values $(\rho^-,\rho^+)$ 
must satisfy 
$ f^-(\rho^-)=f^+(\rho^+)=\bar f$, for some
value $\bar f\in\mathbb{R}^+ $ in the range of both functions $f^-$ and $f^+$. 
The horizontal line $f=\bar f$ intersects twice with each graph of  $f^-$ and $f^+$,
see Figure~\ref{fig:rhos}. We have
\begin{equation}\label{eq:rhoss}
0\le \rho_1 < \rho_2 \le \hat \rho \le \rho_3 < \rho_4 \le 1, \quad
f^-(\rho_1)=f^-(\rho_4)=f^+(\rho_2)=f^+(\rho_3)=\bar f.
\end{equation}
%
%

\begin{figure}[htbp]
\begin{center}
\setlength{\unitlength}{0.85mm}
\begin{picture}(60,50)(-3,-5)  
\put(0,0){\vector(1,0){60}}\put(60,-2){$\rho$}
\put(0,0){\vector(0,1){40}}
\multiput(0,15)(2,0){24}{\line(1,0){1}}\put(-5,13){$\bar f$}
\multiput(5.2,15)(0,-2){8}{\line(0,-1){1}}\put(2,-4){ $\rho_1$}
\multiput(12.5,15)(0,-2){8}{\line(0,-1){1}}\put(11,-4){ $\rho_2$}
\multiput(25,40)(0,-2){20}{\line(0,-1){1}}\put(22,-4){ $\hat\rho$}
\multiput(37.5,15)(0,-2){8}{\line(0,-1){1}}\put(33,-4){ $\rho_3$}
\multiput(44.7,15)(0,-2){8}{\line(0,-1){1}}\put(43,-4){ $\rho_4$}
\put(25.5,22){$f^+$}\put(35,36){$f^-$}
\thicklines
\qbezier(0,0)(25,40)(50,0)
\qbezier(0,0)(25,80)(50,0)
\end{picture}
\caption{Graphs of the functions $f^-, f^+$, 
and locations of $\rho_1,\rho_2,\rho_3,\rho_4$ and $\hat\rho$.}
\label{fig:rhos}
\end{center}
\end{figure}

There are 4 subcases in total: 

\begin{center}
\begin{tabular}{|c|c|c|c|c|}
\hline
subcase & 1A & 1B & 1C & 1D \\
\hline
$(\rho^-,\rho^+)$ & $(\rho_1,\rho_2)$ & $(\rho_1,\rho_3)$&$(\rho_4,\rho_3)$&$(\rho_4,\rho_2)$\\
\hline
\end{tabular}
\end{center}
%

We first observe that the cases with $\bar f =0$ are trivial. 
In this case we have $\rho_1=\rho_2=0$ and $\rho_3=\rho_4=1$, 
and the following:

\begin{center}
\begin{tabular}{|c|c|c|c|c|}
\hline
subcase & 1A & 1B & 1C & 1D \\
\hline
Profile & $P(x) \equiv 0$ & unit step function &$P(x)\equiv 1$&no profile, see section~\ref{sec:1CD}\\
\hline
\end{tabular}
\end{center}

%
%
%
%

For the rest
we only consider the nontrivial cases with $\bar f >0$, and $0< \rho^\pm<1$.

\subsection{Subcase 1A: $0<\rho^-<\rho^+\le \hat \rho$} \label{sec:1A}

In this case  $\rho^+\le \hat\rho$  is an unstable asymptote as $x\to +\infty$, 
so the only possible profile on $x\ge 0$ is the constant function $P(x)\equiv \rho^+$. 
Given this as the ``initial condition'', in the next Theorem we solve the initial value problem
and construct a unique 
stationary wave profile on $x<0$.

\begin{theorem}\label{th3}
Given $V^\pm,\rho^\pm$ as in subcase 1A.
There exists a unique monotone  profile $P$ 
which satisfies~\eqref{eq:dPx}-\eqref{eq:asymp}.
We have $P(x)=\rho^+$ on $x\ge 0$. 
\end{theorem}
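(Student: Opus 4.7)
The plan is to construct $P$ in two stages: on the half-line $x \geq 0$ I identify $P$ with the constant $\rho^+$, and on $x < 0$ I extend backward from $0$ by a method of steps. For $x \geq 0$ the integrands in the two terms $v^*(x;P^\ell_{\{x\}})$ and $v^*(L^P(x);P^\ell_{\{x\}})$ see only $V \equiv V^+$, so \eqref{eq:dPx} on this half line is the uniform-speed stationary profile equation for speed $V^+$ analyzed in~\cite{RidderShen2018}. Since $\rho^+ \leq \hat\rho$ is an unstable asymptote, Lemma~\ref{lm:RS} rules out exponential convergence of $P(x)$ to $\rho^+$ at $+\infty$, and the classification of profiles in~\cite{RidderShen2018} then forces $P \equiv \rho^+$ on $[0,\infty)$; the constant is trivially a solution of \eqref{eq:dPx} since both $v^*$ terms collapse to $V^+\phi(\rho^+)$.

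Taking this constant as boundary data, I extend $P$ to $(-\infty, 0)$ by exploiting the positive delay in \eqref{eq:dPx}. Because $L^P(x) \geq x + \ell$, the right-hand side at $x$ depends on $P$ only through its restriction to $[x, L^P(x)+h]$, which lies strictly to the right of $x$. On a short interval $[x_0, 0]$ with $|x_0|$ small the entire right-hand side becomes a Lipschitz function of $P(x)$ alone, with denominator $v^*(x;P^\ell_{\{x\}})$ bounded below uniformly (since $\phi>0$ near $\rho^+$). Standard ODE theory yields unique local existence, and iterating this step extends $P$ uniquely to all of $(-\infty, 0]$. Along the way I must verify that $P \in (0,1)$, that $P$ is monotone increasing, and that $\lim_{x\to-\infty} P(x) = \rho^-$. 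Monotonicity at $x = 0^-$ follows by direct computation: the first average samples part of its mass with the larger speed $V^-$ while the second sees only $V^+$, so $v^*(0^-) - v^*(L^P(0^-)) > 0$ and hence $P'(0^-) > 0$. Propagating this sign via the period constraint \eqref{eq:a3} of Lemma~\ref{lm:3} gives $P'(x) > 0$ throughout $x < 0$, so $P(x) \in (0, \rho^+) \subset (0,1)$ and $L^P$ remains well defined. Finally, once $x < z^\flat$ the equation reduces to the uniform-$V^-$ profile equation; the monotone bounded $P$ admits a limit at $-\infty$, and Lemma~\ref{lm:ASF} forces $V^- f(P(-\infty)) = V^+ f(\rho^+)$. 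Since $P(-\infty) \leq \rho^+ \leq \hat\rho$, the correct root is $\rho_1 = \rho^-$ rather than $\rho_4$.

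The main obstacle I anticipate is propagating the monotonicity $P'(x) > 0$ uniformly across the transition zone $z^\flat \leq x < 0$, where the two nonlocal averages each sample both $V^-$ and $V^+$ in a delicate balance. The local sign at $x = 0^-$ must be converted into a global statement without allowing $P$ to flatten, cross the level $\hat\rho$, or otherwise invalidate the selection between $\rho_1$ and $\rho_4$ at $-\infty$. I expect the integral identity in Lemma~\ref{lm:3}(i), applied on each finite window $[x, L^P(x)]$, to furnish enough rigidity to carry this argument through cleanly; uniqueness of the full profile is then a byproduct of uniqueness at each stage, since the anchor $P(0)=\rho^+$ eliminates the horizontal-shift freedom present in the uniform-speed case.
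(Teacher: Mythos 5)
Your overall architecture matches the paper's: take $P\equiv\rho^+$ on $x\ge 0$ (justified by $\rho^+\le\hat\rho$ being an unstable asymptote), solve backward on $x<0$ by the method of steps using the delay of size at least $\ell$, and identify the limit at $-\infty$ via Lemma~\ref{lm:ASF} and the monotonicity of $f$ on $[0,\hat\rho]$. The local existence and uniqueness on each step interval, and the endgame at $-\infty$, are fine as you describe them.

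The genuine gap is exactly the step you flag as ``the main obstacle'': you never actually prove that $P'\ge 0$ propagates across $[z^\flat,0)$ and beyond, and the mechanism you propose for it --- extracting a sign from the period identity \eqref{eq:a3} --- is not how that identity can be used; \eqref{eq:a3} is an equality of averages over a window of length $\ell/P(x)$ and by itself carries no pointwise sign information for $P'$. The paper closes this with a direct comparison inside the induction: assuming $P'\ge 0$ and $\rho^-<P\le\rho^+$ on $[k\ell,\infty)$, the map $y\mapsto V(y)\phi(P^\ell_{\{x\}}(y))$ is nonincreasing (here $V^->V^+$ makes $V$ nonincreasing, and $\phi\circ P$ is nonincreasing), so the weighted average $v^*(\cdot\,;P^\ell_{\{x\}})$ is nonincreasing in its base point; since $L^P(x)>x$ this gives $v^*(x;P^\ell_{\{x\}})\ge v^*(L^P(x);P^\ell_{\{x\}})$ and hence $P'(x)\ge 0$ directly from \eqref{eq:dPx}, with no delicate balance in the transition zone. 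The period identity is used in the paper for a different and also missing piece of your argument: the uniform lower bound $P>\rho^-$ on each step interval, obtained by contradiction (if $P(y)=\rho^-$ with $P>\rho^-$ to the right, then $v^*>V^-\phi(\rho^-)$ there and the travel time $t_p(y)$ would be strictly less than $\ell/\bar f$ --- i.e.\ the integral $\int_y^{y+\ell/\rho^-} (v^*)^{-1}$ exceeds $\ell/\bar f$, contradicting Lemma~\ref{lm:ASF}). Without these two ingredients your induction does not close, so the proposal as written does not constitute a proof.
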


\begin{proof} 
Given initial condition $P(x)\equiv \rho^+$ on $x\ge 0$, 
the equation~\eqref{eq:dPx} can be solved backward in $x$, 
as an initial value problem, for $x<0$. 
The existence and uniqueness of the solution for this initial value problem
can be established using the method of steps~\cite{MR0477368, MR0141863}
for delay differential equations.

\textbf{Step 1. Method of steps.} First we note that $P'(0) >0$ since $V^->V^+$. 
Now, consider the intervals $I_k = [(k-1)\ell, k \ell]$ with $k=0,-1,-2,\cdots$.
Fix an index $k$. Assume that the solution $P$ is given 
on $x\ge k\ell$, and we extend the solution on $I_k$. 
We claim that, if  
\begin{equation} \label{eq:m}
P'(x) \ge 0, \quad \rho^+\ge P(x) > \rho^-, 
\end{equation}
holds for all $x\ge k\ell$,   then \eqref{eq:m} also holds for $x\in I_k$.

Indeed, fix an $x\in I_k$.  Since the right hand side of~\eqref{eq:dPx} 
is a given Lipschitz continuous function, the existence and uniqueness of the solution
follow from standard theory on scalar ODE.  
It remains to establish the desired properties. 
Since $V$ is monotone decreasing,  $P$ is monotone increasing and 
$\phi$ is a monotone decreasing function, therefore $x\mapsto V \phi(P)$
is monotone decreasing. 
We have 
\[
v^*(L^P(x);P^\ell_{\{x\}}) \le v^*(x;P^\ell_{\{x\}}),
\]
and thus using~\eqref{eq:dPx} we conclude 
$P'(x) \ge 0$ for $x\in I_k$.

In order to establish  $\rho^-$ as a lower  bound for $P$ on $I_k$, we use contradiction.
We assume that there exists a $y\in I_k$ such that 
\[
   P(y) =\rho^- \qquad\mbox{and} \quad P(x) > \rho^- \quad \forall x>y.
\]
Then we have $v^*(x;P^\ell_{\{x\}})>V^- \phi(\rho^-)$ for any $x>y$. 
The time it takes for the car at $y$ to reach its leader is
\[
t_p(y) = \int_y^{y+\ell/\rho^-} \frac{1}{v^*(x;P^\ell_{\{x\}})} dx 
> \int_y^{y+\ell/\rho^-} \frac{1}{V^- \phi(\rho^-)} dx
= \frac{\ell}{\rho^- V^- \phi(\rho^-)} 
=\frac{\ell}{\bar f}.
\]
But according to Lemma~\ref{lm:ASF} we must have $t_p(y)=\ell/\bar f$, 
a contradiction.
We conclude that $P(x) > \rho^-$ on $I_k$.

Applying  the argument repeatedly on $k=0,-1,-2,\cdots$, by induction 
we conclude that there exists a unique monotone profile $P$ on $x<0$
satisfying~\eqref{eq:m}. 


\textbf{Step 2. Asymptotic condition.}
It remains to establish the asymptotic condition $\lim_{x\to-\infty} P(x) =\rho^-$.
Since the function $P$ is monotone and bounded, 
the limit $\tilde \rho^-\doteq \lim_{x\to-\infty} P(x)$
exists. 
By Lemma~\ref{lm:ASF}, $\tilde \rho^- $ must satisfy 
$V^- f(\tilde\rho^-) = \bar f = V^- f(\rho^-) $, where both $\tilde\rho^-$ and $\rho^-$
are less than $\hat\rho$. Since $f$ is monotone on $\rho<\hat\rho$, 
we conclude that $\tilde\rho^-=\rho^-$.
This complete the proof.
\end{proof}

\paragraph{Stability issue.}
Since $\rho^+ \le \hat\rho$ is an unstable asymptote, 
the constant function $P(x)=\rho^+$ on $x>0$ 
does not attract nearby solutions of the FtLs model,
therefore the profile is not the time asymptotic limit 
for solutions of the FtLs model. 
This is further supported by the following numerical results.

\paragraph{Sample profile and numerical simulations for the FtLs model.}
For all the numerical simulations in this paper, we use the following parameters and functions
\begin{equation}\label{eq:simp}
\ell=0.05, \quad h=0.5, \quad \phi(\rho)=1-\rho \quad \rho\in[0, 1], 
\quad w(s) = \frac{2}{h}-\frac{2}{h^2}s \quad s\in(0,h].
\end{equation}
For subcase 1A, we use the parameters
\begin{equation}\label{eq:simp1A}
V^-=2, \quad V^+=1, \quad \bar f =3/16, \quad dz=0.0002.
\end{equation}
A typical profile $P$ for subcase 1A is given in Figure~\ref{fig:1A} (left plot). 
We observe that the profile is  monotone and  continuously differentiable.

A numerical simulation is also performed for the FtLs model~\eqref{FtLs}, 
with the following Riemann-like initial data
\begin{equation}\label{eq:z1B}
z_i(0) = \begin{cases} i \frac{\ell}{\rho^+}, \quad & i\ge 0,\\ 
i \frac{\ell}{\rho^-} \quad & i< 0, \end{cases}
\qquad 
\mbox{such that} \quad 
\rho_i(0) = \begin{cases}\rho^+, \quad & z_i(0)\ge 0,\\ 
\rho^- \quad & z_i(0)< 0. \end{cases}
\end{equation}
Numerical solutions for the FtLs model for $0\le t \le T_f=4$ is computed.
Let $\tau_p=\frac{\ell}{f^+(\rho^+)}$.  
The graphs for the mappings $z_i(t)\mapsto \rho_i(t)$ 
for $t\in[T_f-\tau_p,T_f]$ are plotted in green in Figure~\ref{fig:1A} (right plot)
together with the points $\{ z_i(T_f),\rho_i(T_f)\}$ in red. 
We observe that, as time grows, some oscillation enters the region 
$x>0$, where the profile $P(x)=\rho^+$ is constant. 
Since $\rho^+ < \hat\rho$ is not a stable asymptote,
the oscillation persists as time grows, 
and the solution does not approach the stationary  profile $P$.
This behavior is typical for subcase 1A and is independent on the parameters
($\ell, dz, \bar f, V^-, V^+$, etc.), as long as we are in subcase 1A.

\begin{figure}[htbp]
\begin{center}
\includegraphics[height=3.2cm,clip,trim=2mm 1mm 7mm 6mm]{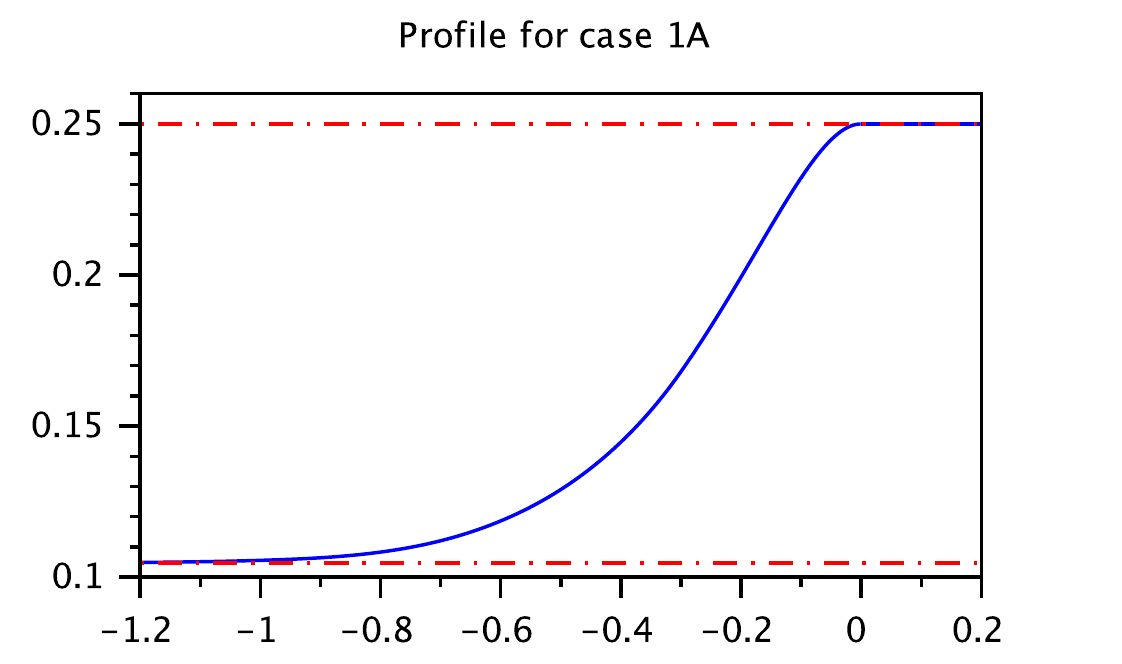}
\includegraphics[height=3.2cm,clip,trim=2mm 1mm 12mm 6mm]{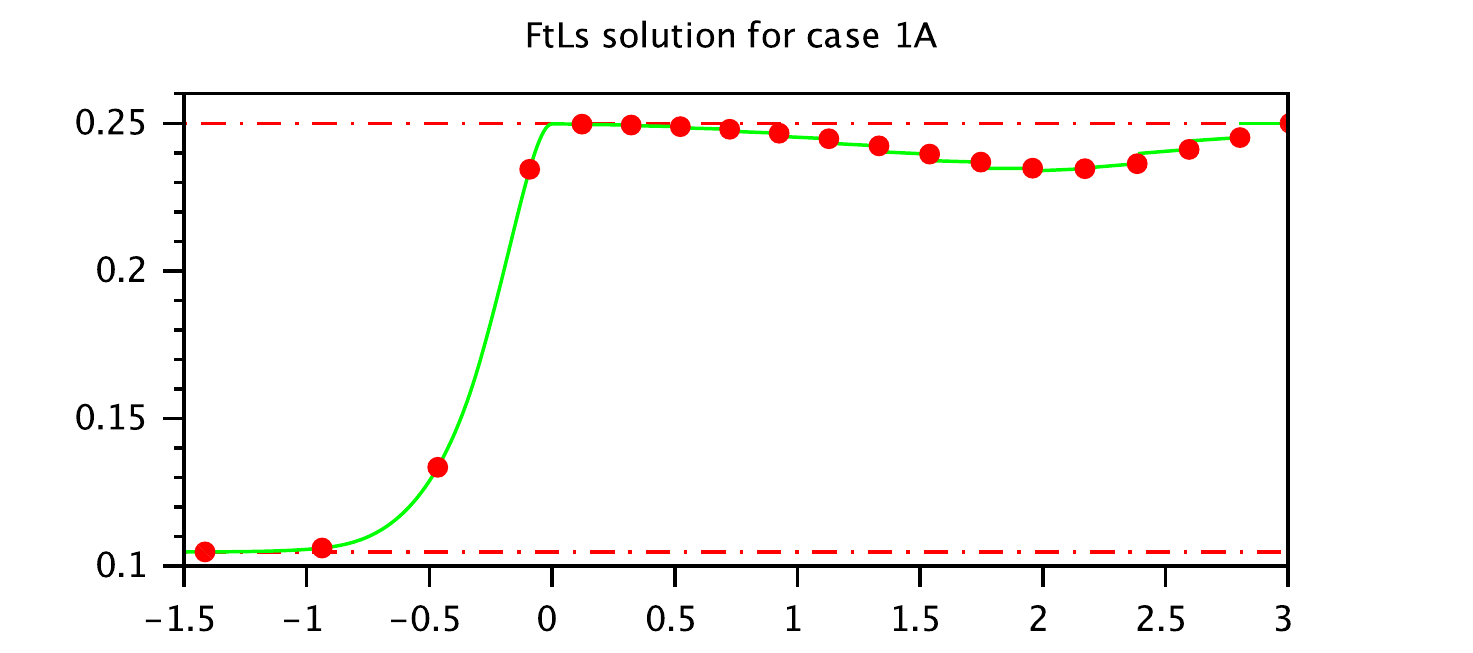}
\caption{Left: Typical profile $P$ for case 1A. Right: Solution of the FtLs model with Riemann-like initial condition.}
\label{fig:1A}
\end{center}
\end{figure}

\subsection{Subcase 1B:  with $0<\rho^- < \hat\rho< \rho^+<1$}
\label{sec:1B}

For this subcase we have $\rho^- < \hat\rho$ and $\rho^+>\hat\rho$, 
where both $\rho^-,\rho^+$ are stable asymptotes. 
We will show that
there are infinitely many monotone stationary wave profiles.

\begin{theorem}[Existence of profiles]\label{th1}
Given $V^\pm,\rho^\pm$ as in subcase 1B.
There exist infinitely many   monotone stationary wave profiles $P$ 
which satisfies the equation~\eqref{eq:dPx} and the asymptotic 
conditions~\eqref{eq:asymp}. 
\end{theorem}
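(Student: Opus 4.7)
The plan is to exhibit a one-parameter family of distinct monotone profiles indexed by a horizontal shift $s$ of a reference profile on the right half-line, and to extend each such right-side profile backward across $x=0$ by the method of steps.

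First I would note that on $x \geq 0$ the coefficient $V$ is identically $V^+$ and the delay $L^P(x) = x + \ell/P(x) \geq 0$, so both integration windows $[x,x+h]$ and $[L^P(x), L^P(x)+h]$ appearing in $v^*$ lie entirely in $\{V=V^+\}$; hence equation~\eqref{eq:dPx} restricted to $[0,\infty)$ coincides with the profile equation for the uniform-$V^+$ FtLs model treated in~\cite{RidderShen2018}. From that work there is a monotone profile $W$ on $\mathbb{R}$ with $W(-\infty)=\rho_2$ and $W(+\infty)=\rho_3=\rho^+$, unique up to horizontal shift. For each $s\in\mathbb{R}$ I would set $W_s(x)=W(x-s)$ and define the candidate $P(x)=W_s(x)$ on $x\geq 0$; varying $s$ causes $P(0)=W_s(0)$ to sweep over $(\rho_2,\rho_3)$, so distinct shifts produce pointwise distinct right halves.

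Next I would extend each candidate backward by the method of steps on intervals $I_k=[(k-1)\ell, k\ell]$ with $k=0,-1,-2,\dots$, mirroring Theorem~\ref{th3}. Since $P(x)\leq 1$ implies the delay $\ell/P(x)\geq \ell$, the right-hand side of~\eqref{eq:dPx} at a point $x\in I_k$ depends only on the already-known values of $P$ on $[k\ell,\infty)$ together with $P(x)$ itself, so local existence and uniqueness for a scalar ODE extend $P$ uniquely across $I_k$. The inductive invariant I would maintain on $[k\ell,\infty)$ is piecewise continuous differentiability, monotone nondecreasingness, and $\rho^-<P(x)\leq \rho^+$. Monotonicity propagates to $I_k$ because $V$ is nonincreasing and $\phi\circ P$ is nonincreasing by induction, so $y\mapsto V(y)\phi(P(y))$ is nonincreasing, yielding $v^*(L^P(x);P^\ell_{\{x\}}) \leq v^*(x;P^\ell_{\{x\}})$ and thus $P'(x)\geq 0$ via~\eqref{eq:dPx}. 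The strict lower bound $P(x)>\rho^-$ would be enforced by the contradiction argument from Theorem~\ref{th3}: were $P(y)=\rho^-$ at some $y$, the period integral $\int_y^{L^P(y)} 1/v^*\,dx$ would strictly exceed $\ell/\bar f$, violating Lemma~\ref{lm:ASF}.

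Once the backward extension is global, monotonicity and boundedness give $\tilde\rho^- := \lim_{x\to-\infty} P(x) \in [\rho^-,\rho^+]$, and Lemma~\ref{lm:ASF} forces $V^- f(\tilde\rho^-)=\bar f$. The only root of this equation lying in $[\rho_1,\rho_3]$ is $\rho_1=\rho^-$ (its companion $\rho_4$ exceeds $\rho_3$), so $\tilde\rho^-=\rho^-$. Distinct shifts $s$ then produce distinct profiles, either directly from their distinct right halves or via Lemma~\ref{lm:order} applied to the full extensions, which yields the claimed one-parameter family. The principal obstacle I anticipate is propagating the inductive monotonicity and lower-bound estimates across the transition region $z^\flat\leq x<0$, where the integration kernel of $v^*$ straddles the jump of $V$ and the piecewise-constant density $P^\ell_{\{x\}}$ mixes pre- and post-jump values; once this is controlled, the remainder reduces to the uniform-$V$ construction of~\cite{RidderShen2018} and the method-of-steps argument of Theorem~\ref{th3}.
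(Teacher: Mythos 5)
Your proposal is correct and follows essentially the same route as the paper: identify the right half of each profile with a horizontal shift of the uniform-road profile $W$ from~\cite{RidderShen2018}, then extend backward across $x=0$ by the method of steps exactly as in Theorem~\ref{th3}, using the monotonicity of $x\mapsto V(x)\phi(P(x))$ (which holds across the jump since $V^->V^+$) and the periodicity constraint of Lemma~\ref{lm:ASF} to propagate the bounds and pin down the limit at $-\infty$. The ``transition region'' difficulty you flag is already resolved by that same monotonicity observation, as the paper's argument implicitly uses.
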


\begin{proof}
On $x\ge0$, we have $V(x) \equiv V^+$, therefore the profile 
$P$ must be either constant $P(x)\equiv \rho^+$  
or some horizontal shift of $W$,
 i.e., the stationary profile established in~\cite{RidderShen2018}. 
The profile $P$ is smooth and monotone, taking values between 
$\rho_2$ and $\rho^+$, where $\rho_2$ is defined in~\eqref{eq:rhoss}. 
%
In particular, it holds that $\rho_2 < P(0) \le \rho^+$. 

Once the profile is given on $x\ge 0$, we solve an initial value problem backward in $x$.
By the same argument as in the proof of Theorem~\ref{th3}, 
there exists a unique monotone solution for each initial value problem, 
which satisfies also the asymptotic condition $\lim_{x\to -\infty}P(x) =\rho^-$. 
Therefore there exist infinitely many stationary wave profiles.
\end{proof}

\textbf{Local stability of the profiles.}
Recall the definitions of $\rho_1, \rho_2, \rho_3,\rho_4$ in~\eqref{eq:rhoss}. 
Let $P^\sharp$ denote the profile with $P(x) = \rho^+=\rho_3$ on $x\ge 0$,
as in Theorem~\ref{th1}. 
Let $P^\flat$ denote the profile with $P(x)= \rho_2$ on $x\ge 0$, as in 
Theorem~\ref{th3}.
We define the region $D$ as:
\begin{equation}\label{eq:D}
D \defeq \left\{ (x,\rho) : P^\flat(x) < \rho \le P^\sharp(x), ~ x\in\mathbb{R} \right\}.
\end{equation}

The next Theorem shows that $D$ is a basin of attractions of the stationary profiles.

\begin{theorem}[Local stability of the profiles]\label{th2}

Let $\{ z_i(t),\rho_i(t)\}$ be the solution of the FtLs model~\eqref{FtLs}, with initial
condition $\{z_i(0),\rho_i(0)\}$ which satisfies
\begin{equation}\label{eq:k1}
(z_i(0), \rho_i(0)) \in D \qquad  \forall i\in\mathbb{Z}.
\end{equation} 
Then, we have
\begin{equation}\label{eq:k1t}
(z_i(t), \rho_i(t)) \in D \qquad  \forall i\in\mathbb{Z}, t\ge 0.
\end{equation} 
Furthermore, there exists a stationary wave profile $\tilde P$, 
whose graph lies between $P^\flat$ and $P^\sharp$,
such that 
\begin{equation}\label{eq:k2}
\lim_{t\to\infty} \left[\tilde P(z_i(t)) -  \rho_i(t) \right] =0, \qquad \forall i\in\mathbb{Z}.
\end{equation} 
\end{theorem}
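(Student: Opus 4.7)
The plan is to prove Theorem~\ref{th2} in two stages: first establish positive invariance of $D$ under the FtL flow \eqref{FtLs}, then extract a stationary profile $\tilde P$ that attracts the orbit. For invariance, the main tool is a comparison principle for the FtL system: if $\{z^{(1)}_i\}$ and $\{z^{(2)}_i\}$ are two solutions with $z^{(1)}_i(0) \le z^{(2)}_i(0)$ for every $i$, then the inequality propagates in $t$. This is proved by contradiction at a first crossing time $t^\star$ and index $i^\star$: the monotonicity of $\phi$ together with $w \ge 0$ yield $\dot z^{(1)}_{i^\star}(t^\star) \le \dot z^{(2)}_{i^\star}(t^\star)$, precluding the crossing. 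Applied to the periodic orbits generated by $P^\flat$ and $P^\sharp$ (whose common period is $t_p = \ell/\bar f$ by Lemma~\ref{lm:ASF}), this yields a two-sided sandwich at $t=0$ from hypothesis \eqref{eq:k1}, which is preserved in time and equivalent to \eqref{eq:k1t}.

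Once invariance is in hand, inter-car spacings along the orbit are uniformly bounded above and below, and hence so are the velocities $v^*$ and the density derivatives $\dot\rho_i$. Considering the discrete density function $\rho^\ell(t,\cdot)$ along the time sequence $t_n = n t_p$, an Arzel\`a--Ascoli argument combined with the Lipschitz control coming from \eqref{eq:dPx} extracts a subsequential limit $\tilde P$ with $P^\flat \le \tilde P \le P^\sharp$. Passing to the limit in \eqref{FtLs}, the limit orbit is itself a periodic FtL solution of period $t_p$, and Lemma~\ref{lm:3}(ii) identifies $\tilde P$ as a solution of the profile equation \eqref{eq:dPx}.

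To upgrade subsequential convergence to the full statement \eqref{eq:k2}, I would use the monotone quantities $\Phi^\sharp(t) = \sup_i\big(P^\sharp(z_i(t)) - \rho_i(t)\big)$ and $\Phi^\flat(t) = \sup_i\big(\rho_i(t) - P^\flat(z_i(t))\big)$, both non-increasing along the flow as a consequence of the comparison principle applied between the orbit and the two bounding periodic orbits. Their limits as $t\to\infty$ pin down a unique profile $\tilde P$ sandwiched between $P^\flat$ and $P^\sharp$ via Lemma~\ref{lm:order}. The principal obstacle is precisely this selection step: Theorem~\ref{th1} produces an entire one-parameter family of profiles between $P^\flat$ and $P^\sharp$, so the flow must choose one based on the initial data, and persistent oscillations must be excluded. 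This rigidity is delicate because cars crossing $x=0$ experience a discontinuous velocity change; the method-of-steps uniqueness from the proof of Theorem~\ref{th3} should propagate the $\omega$-limit data from $x \ge 0$ back to $x<0$ and secure uniqueness of the selected profile.
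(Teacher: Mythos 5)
Your invariance step is plausible and your extraction of a subsequential limit profile via Arzel\`a--Ascoli is fine, but the convergence statement \eqref{eq:k2} is not actually reached, and you correctly sense where the trouble is. The quantities $\Phi^\sharp(t)=\sup_i\bigl(P^\sharp(z_i(t))-\rho_i(t)\bigr)$ and $\Phi^\flat(t)=\sup_i\bigl(\rho_i(t)-P^\flat(z_i(t))\bigr)$ cannot do the selection job: they measure distance only to the two \emph{extreme} leaves of the family of profiles, and even if both were monotone their limits would not identify a single intermediate profile, nor exclude that different cars (or different subsequences $t_{n_k}$) settle onto different members of the one-parameter family of Theorem~\ref{th1}. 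Moreover, monotonicity of these sups does not follow from an order-comparison with the fixed periodic orbits of $P^\sharp$ and $P^\flat$; a comparison principle preserves an ordering of positions, it does not make the supremum of a pointwise density gap decrease. The method-of-steps uniqueness from Theorem~\ref{th3} also does not help here: it reconstructs a profile on $x<0$ from its trace on $x\ge0$, but your $\omega$-limit data on $x\ge0$ is exactly what has not yet been shown to be unique.

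The idea that closes this gap in the paper is to use the \emph{whole} family of profiles as a foliation of $D$: by Lemma~\ref{lm:order} the profiles never cross, so each point of $D$ lies on a unique profile, and one can assign to each car the label $\Psi_i(t)=P(0)$ of the profile $P$ passing through $(z_i(t),\rho_i(t))$. The Lyapunov-type statement is then that $\max_i\Psi_i(t)$ is strictly decreasing and $\min_i\Psi_i(t)$ strictly increasing. The proof of this is a maximum-principle computation at the extremal index $J$, comparing the true dynamics with the profile $\hat P$ \emph{through the point} $(z_J,\rho_J)$ (not with $P^\sharp$): because $\hat P(z_J)=\rho_J$, the leader positions coincide, the two velocity averages agree on $[z_J,z_{J+1})$, and the difference of the bracketed terms in \eqref{eq:dPx} reduces to a single integral over $[z_{J+1},z_{J+1}+h)$ whose sign is pinned down by $\phi'<0$ together with $w'<0$. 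This is where the structural hypotheses on $w$ enter, and it is the step your proposal is missing; if you attempt the same computation against the fixed profile $P^\sharp$ the cancellations fail and the sign is not controlled. Once $\max_i\Psi_i-\min_i\Psi_i\to0$, all cars collapse onto a single leaf $\tilde P$, which gives both \eqref{eq:k1t} and \eqref{eq:k2} simultaneously, without any separate compactness or periodic-orbit argument.
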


\begin{proof} \textbf{(1).}
By Lemma~\ref{lm:order}, all stationary profiles in Theorem~\ref{th1} never cross each other. 
Then, any point $(x,\rho)\in D$ lies on a unique stationary profile. 
In other words, the region $D$ can be parametrized by the profiles $P$. 
This motivates the definition of a function
\begin{equation}\label{eq:Psi}
\Psi(x,\rho) \defeq P(0), \quad \mbox{where } (x,\rho)\in D ~ \mbox{and $P$ is the profile with}~P(x)=\rho.
\end{equation} 

Let $\{z_i(t),\rho_i(t)\}$ be the solution of the FtLs model, and define 
\begin{equation}\label{eq:Psi_i}
\Psi_i(t) \defeq \Psi(z_i(t),\rho_i(t)), \qquad \forall i\in\mathbb{Z}, \forall t\ge 0.
\end{equation} 

\textbf{(2).}   Fix a time $t\ge 0$ 
and assume that $\{z_i(t), \rho_i(t)\} \in D$  for all $i \in\mathbb{Z}$.
Let $J$ be the index such that 
\begin{equation}\label{eq:Ps2}
\Psi_J(t) \ge \Psi_i(t) \quad \forall i\in\mathbb{Z} \qquad \mbox{and} \quad
\Psi_J(t) > \Psi_i(t) \quad \forall i >J.
\end{equation}
Assume that $J$ is finite, and 
let $\hat P$ be the profile that satisfies $\hat P(z_J(t)) = \rho_J(t)$. 
By~\eqref{eq:Ps2} we have
\begin{equation}\label{eq:Ps3}
\hat P(z_i) > \rho_i \qquad \forall i > J. 
\end{equation}

We claim that 
\begin{equation}\label{eq:Ps4}
\dot\Psi_J(t) < 0,\qquad \mbox{i.e.,}\qquad 
\hat  P'(z_J) > \frac{\dot \rho_J}{\dot z_J}.
\end{equation}
Indeed, let $\{\hat z_i\}$ be the unique car distribution generated by $\hat P$ 
with $\hat z_J=z_J$, and
let $\hat P^\ell_{\{z_J\}}$ be the piecewise constant function generated by $\hat P$ 
according to~\eqref{eq:Pell2}. 
We also let $\rho^\ell(t,\cdot)$ be the piecewise function defined in~\eqref{eq:rhoell}. 
By~\eqref{eq:dPx}, \eqref{FtLs} and \eqref{rhodot} we have 
\begin{eqnarray*}
\hat P'(z_J) &=& \frac{\rho^2_J}{\ell \cdot v^*(z_J;\hat P^\ell_{\{z_J\}})} \Big[v^*(z_J;\hat P^\ell_{\{z_J\}})-
v^*(L^{\hat P}(z_J);\hat P^\ell_{\{z_J\}})\Big],\\
\frac{\dot \rho_J}{\dot z_J} &=& \frac{\rho^2_J}{\ell \cdot v^*(z_J;\rho^\ell)} \Big[v^*(z_J;\rho^\ell)-
v^*(z_{J+1};\rho^\ell)\Big].
\end{eqnarray*} 
We compute
\begin{eqnarray*}
I_1&\defeq &\left[ v^*(z_J;\hat P^\ell_{\{z_J\}})  - v^*(z_{J+1};\hat P^\ell_{\{z_J\}}) \right] - \left[ v^*(z_J;\rho^\ell )  - v^*(z_{J+1};\rho^\ell)\right] \\
&=& \left[ v^*(z_J;\hat P^\ell_{\{z_J\}})  - v^*(z_J;\rho^\ell )  \right] - \left[ v^*(z_{J+1};\hat P^\ell_{\{z_J\}}) - v^*(z_{J+1};\rho^\ell)\right] \\
&=& \int_{z_J}^{z_J+h} V(y) \left[ \phi(\hat P^\ell_{\{z_J\}}(y))- \phi(\rho^\ell(t,y))\right] w(y-z_J)\; dy\\
&& - \int_{z_{J+1}}^{z_{J+1}+h} V(y) \left[ \phi(\hat P^\ell_{\{z_J\}}(y))- \phi(\rho^\ell(t,y))\right] w(y-z_{J+1})\; dy.
\end{eqnarray*}
Since $\hat P(z_J)=\rho_J$,  we have $z_{J+1} = \hat z_{J+1}$.  
By~\eqref{eq:Ps3} we also have 
\[ 
\hat P^\ell_{\{z_J\}}(x) = \rho^\ell(t,x) \quad \forall x\in[z_J, z_{J+1}), \qquad 
\hat P^\ell_{\{z_J\}}(x) > \rho^\ell(t,x) \quad \forall x > z_{J+1}. 
\]
We arrive at
\begin{eqnarray*}
I_1 &=& \int_{z_{J+1}}^{z_{J+1}+h} V(y)  \left[ \phi(\hat P^\ell_{\{z_J\}}(y))- \phi(\rho^\ell(t,y))\right] 
\cdot \left[ w(y-z_J)-w(y-z_{J+1})\right]\; dy.
\end{eqnarray*}
Since $\phi'<0$, we have
\[
\phi(\hat P^\ell_{\{z_J\}}(y)) - \phi(\rho^\ell(t,y))  <0 \quad \forall y > z_{J+1}. 
\]
Recall also that $w'<0$ on its support $[0,h]$, then we have
\[
 w(y-z_J)-w(y-z_{J+1})  <0 \quad \forall y\in [z_{J+1},z_{J+1}+h).
\]
Therefore we conclude that 
$I_1>0$, which implies~\eqref{eq:Ps4}. 


\textbf{(3).} A completely symmetric result holds for the minimum.
Let $\hat J$ be the index such that
\begin{equation}\label{eq:Ps2min}
\Psi_{\hat J}(t) \le \Psi_i(t) \quad \forall i\in\mathbb{Z} \qquad \mbox{and} \quad
\Psi_{\hat J}(t) < \Psi_i(t) \quad \forall i >{\hat J}.
\end{equation}
Assume $\hat J$ is finite, and
let $\widehat P$ be the profile that satisfies $\widehat P(z_{\hat J}(t)) = \rho_{\hat J}(t)$. 
Then
\begin{equation}\label{eq:Ps5}
\dot\Psi_{\Hat J}(t) > 0,\qquad \mbox{i.e.,}\qquad 
\widehat P'(z_{\Hat J}) < \frac{\dot \rho_{\Hat J}}{\dot z_{\Hat J}}.
\end{equation}

\textbf{(4).}
We note that,  if $\{z_i(0), \rho_i(0)\} \in D$ for all $i \in\mathbb{Z}$, 
then~\eqref{eq:Ps4} and~\eqref{eq:Ps5}  imply that 
 $\{z_i(t), \rho_i(t)\} \in D$  for all $i \in\mathbb{Z}$ and  $t\ge 0$.
Furthermore, if $D$ is not empty, then at least one of $J,\hat J$ is finite,
we have 
\[
\lim_{t\to\infty} \left[ \max_{i\in\mathbb{Z}}\{\Psi_i(t)\} 
- \min_{i\in\mathbb{Z}}\{\Psi_i(t)\} \right] = 0,
\]
which further implies~\eqref{eq:k2}, completing the proof.
\end{proof}

\paragraph{Sample profiles and numerical simulations for the FtLs model.} 
We use the same parameters and functions in~\eqref{eq:simp}-\eqref{eq:simp1A}.
Sample profiles for subcase 1B are given in Figure~\ref{fig:1B} (left plot).
We observe that on $x>0$, the profiles match the horizontally shifted versions
of $W$.  The profiles are continuous and monotone.

Numerical simulations for the FtLs model~\eqref{FtLs} are performed with the following
Riemann-like initial data
\begin{equation}\label{eq:sim1A}
z_i(0) = \begin{cases} i \frac{\ell}{\rho^+} +c_0, \quad & i\ge 0,\\ 
i \frac{\ell}{\rho^-}+c_0 \quad & i< 0, \end{cases}
\qquad 
\mbox{such that} \quad 
\rho_i(0) = \begin{cases}\rho^+, \quad & z_i(0)\ge c_0,\\ 
\rho^- \quad & z_i(0)< c_0. \end{cases}
\end{equation}
Here $c_0\in\mathbb{R}$ is a small number, which we can vary 
to simulate various cases.
Numerical solutions for the FtLs model at $T_f=4$ is computed,
for 
\[ c_0 = \gamma \cdot \ell/\rho^-, \qquad \gamma = \{ -0.1, 0.1, 0.5, 1\}. \]
Similar to subcase 1A, we plot the mappings $z_i(t)\mapsto \rho_i(t)$ 
for $t\in[T_f-\tau_p,T_f]$ in green in Figure~\ref{fig:1A} (right plot),
together with the points $\{ z_i(T_f),\rho_i(T_f)\}$ in red. 
We observe that, after a  short time, the solutions
$\{z_i(t),\rho_i(t)\}$  approach some stationary wave profile $P$, 
confirming the stability result in Theorem~\ref{th2}.

\begin{figure}[htbp]
\begin{center}
\includegraphics[height=3.2cm,clip,trim=1mm 2mm 10mm 7mm]{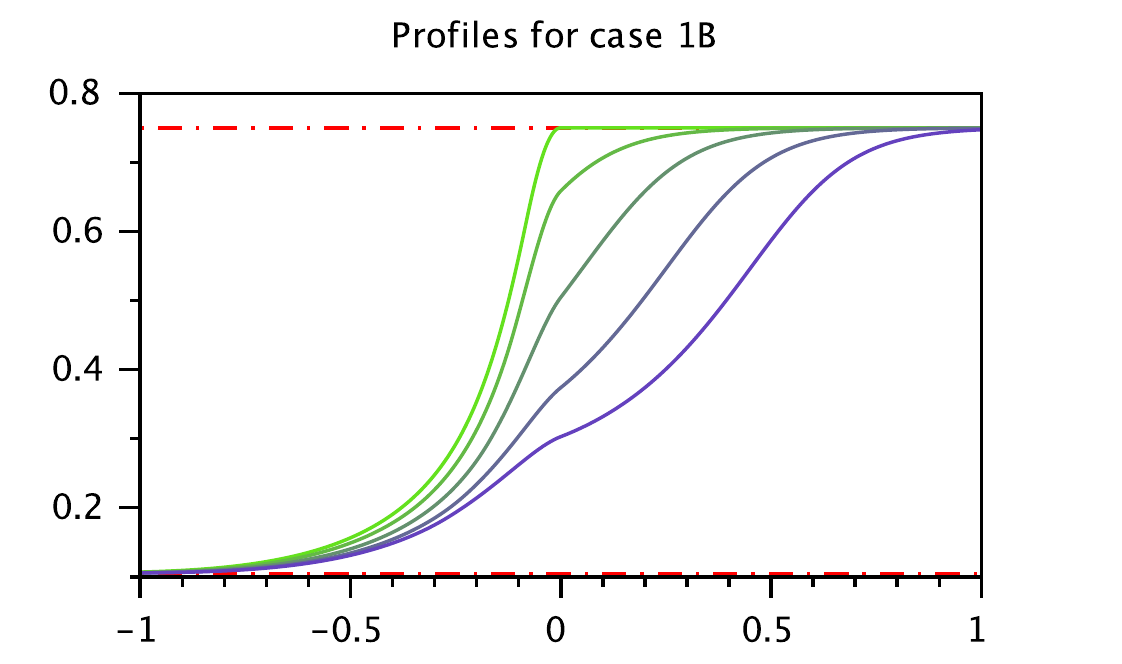}
$\quad$
\includegraphics[height=3.2cm,clip,trim=1mm 2mm 10mm 7mm]{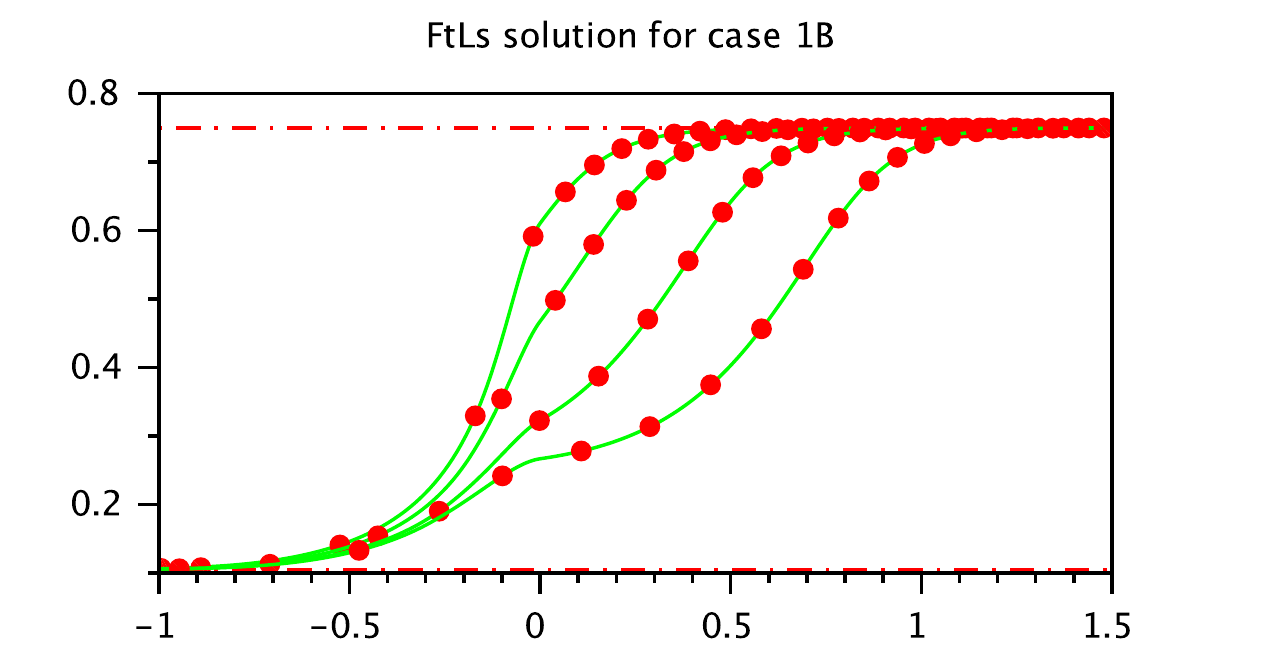}
\caption{Left: Sample profiles $P$ for subcase 1B. Right: Solution of the FtLs model with Riemann-like initial conditions for subcase 1B.}
\label{fig:1B}
\end{center}
\end{figure}

\subsection{Subcase 1C  $\hat\rho \le \rho^+ <\rho^-<1$ 
and subcase 1D  $0 < \rho^+ <\hat\rho< \rho^- < 1$} 
\label{sec:1CD}

For subcase 1C,  $\rho^- > \hat\rho$ is not a stable asymptote for $x\to-\infty$.
For subcase 1D,  $\rho^+ < \hat\rho, \rho^- > \hat\rho$ are not a stable asymptotes 
for $x\to-\infty$.
Therefore there are no stationary profiles for either subcases. 

\paragraph{Numerical simulations.}
We perform numerical simulation for subcases 1C and 1D,  
for the FtLs model~\eqref{FtLs} with Riemann-like initial condition~\eqref{eq:z1B}.
The results are presented in Figure~\ref{fig:1C1D}.
For each subcase, we plot the mappings $z_i(t)\mapsto \rho_i(t)$ for $t\in[T_f-\tau_p,T_f]$
in green, as well 
the points $\{ z_i(T_f),\rho_i(T_f)\}$ in red.
For subcase 1C, we observe that oscillations enter into the region $x<0$ as
time grows, and they persist in time. 
For subcase 1D, we observe that oscillations enter both regions $x<0$ and $x>0$
as $t$ grow.

\begin{figure}[htbp]
\begin{center}
\includegraphics[height=3.2cm,clip,trim=1mm 2mm 8mm 7mm]{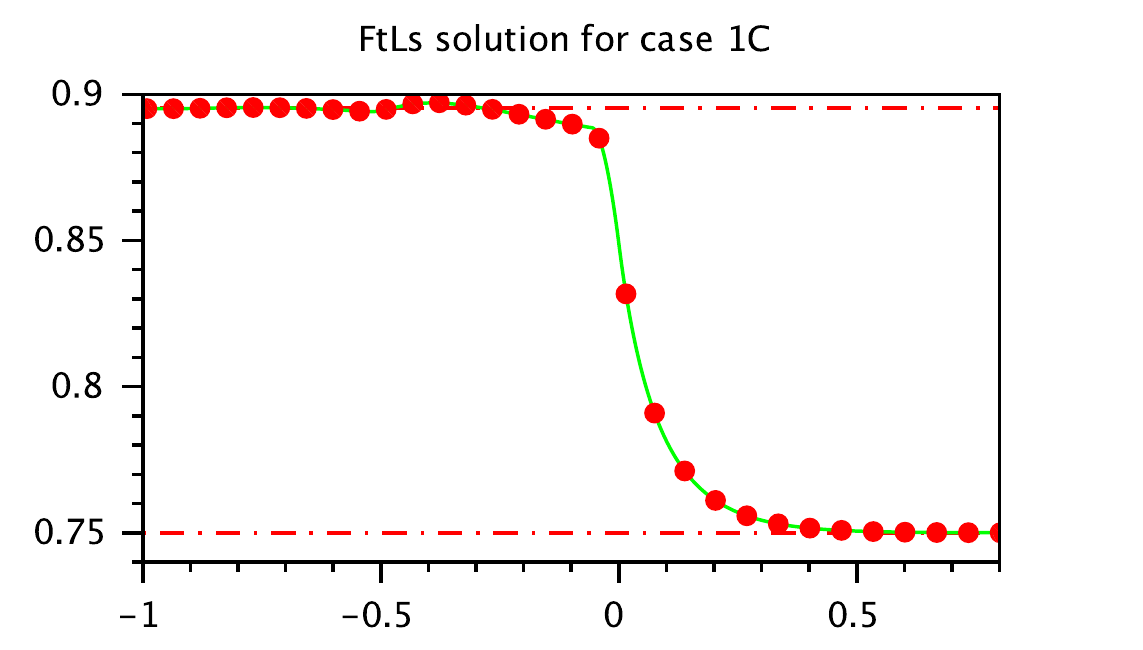}$\quad$
\includegraphics[height=3.2cm,clip,trim=0mm 2mm 10mm 7mm]{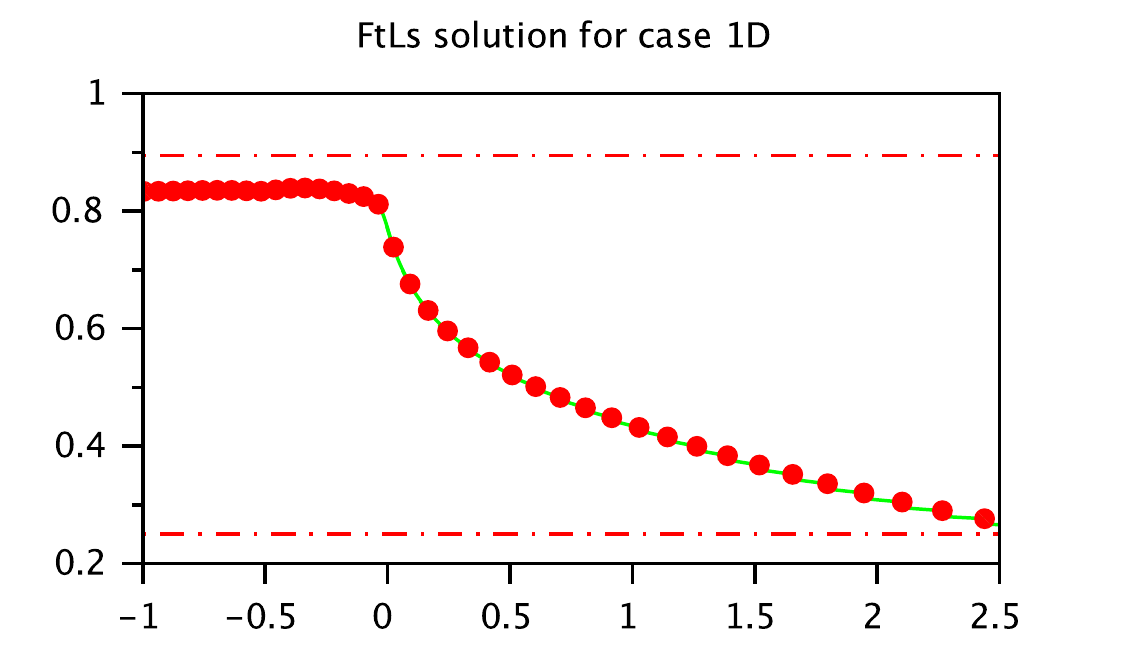}
\caption{Solution of the FtLs model with Riemann-like initial condition, for subcase 1C (left plot) 
and subcase 1D (right plot).}
\label{fig:1C1D}
\end{center}
\end{figure}

\section{Case 2. $V^-<V^+$}
\label{sec:case2}
\setcounter{equation}{0}
In this section we establish results on stationary wave profiles for the case $V^- < V^+$.
The structure and some details of the analysis are similar to case 1, 
therefore we skip the repetitive details and emphasize the differences. 
%
%
Fix a value $\bar f$, we set 
(see Figure~\ref{fig:F2})
\begin{equation}\label{defs:pf}
0\le \rho_1 < \rho_2\le \hat\rho \le \rho_3 <\rho_4\le 1,
\qquad 
f^+(\rho_1)=f^+(\rho_4)=
f^-(\rho_2)=f^-(\rho_3)=\bar f.
\end{equation}

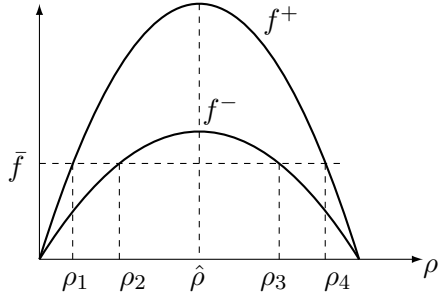
\begin{figure}[htbp]
\begin{center}
\setlength{\unitlength}{0.85mm}
\begin{picture}(60,50)(-3,-5)  
\put(0,0){\vector(1,0){60}}\put(60,-2){$\rho$}
\put(0,0){\vector(0,1){40}}
\multiput(0,15)(2,0){24}{\line(1,0){1}}\put(-5,13){$\bar f$}
\multiput(5.2,15)(0,-2){8}{\line(0,-1){1}}\put(2,-4){ $\rho_1$}
\multiput(12.5,15)(0,-2){8}{\line(0,-1){1}}\put(11,-4){ $\rho_2$}
\multiput(25,40)(0,-2){20}{\line(0,-1){1}}\put(22,-4){ $\hat\rho$}
\multiput(37.5,15)(0,-2){8}{\line(0,-1){1}}\put(33,-4){ $\rho_3$}
\multiput(44.7,15)(0,-2){8}{\line(0,-1){1}}\put(43,-4){ $\rho_4$}
\put(25.5,22){$f^-$}\put(35,36){$f^+$}
\thicklines
\qbezier(0,0)(25,40)(50,0)
\qbezier(0,0)(25,80)(50,0)
\end{picture}
\caption{Graphs of the functions $f^-, f^+$, 
and locations of $\rho_1,\rho_2,\rho_3,\rho_4$ and $\hat\rho$.}
\label{fig:F2}
\end{center}
\end{figure}

We consider the nontrivial case $\bar f>0$, in 4 subcases:

\begin{center}
\begin{tabular}{|c|c|c|c|c|}
\hline
subcase & 2A & 2B & 2C & 2D \\
\hline
$(\rho^-,\rho^+)$ & $(\rho_2,\rho_1)$ & $(\rho_2,\rho_4)$&$(\rho_3,\rho_4)$&$(\rho_3,\rho_1)$\\
\hline
\end{tabular}
\end{center}

%
%


\subsection{Subcase 2A: $0<\rho^+ <\rho^-\le \hat \rho$} 

Since $\rho^+ < \hat\rho$,  $\rho^+$ is an unstable asymptote as $x\to+\infty$. 
Similar to subcase 1A, 
the only possible solution on $x\ge 0$ is the constant function
$P(x) \equiv \rho^+$. 
Following  similar arguments as in the proof for Theorem~\ref{th3}, 
there exists  a unique stationary wave profile.

\begin{theorem}[Existence of a unique profile]\label{th5}
Given $V^\pm,\rho^\pm$ as in subcase 2A. 
There exists a unique  stationary wave profile $P$ 
which satisfies~\eqref{eq:dPx}-\eqref{eq:asymp}. 
The profile  is constant on $x\ge 0$, and monotone decreasing on $x<0$. 
\end{theorem}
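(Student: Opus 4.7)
The plan is to follow exactly the template of Theorem~\ref{th3} (subcase 1A), but with the orientation of monotonicity reversed. The structural reason is that both subcases are ``unstable right, stable left,'' with the only difference being the relative order of $\rho^-$ and $\rho^+$ (forced by the direction of the jump in $V$).

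First I would observe that because $\rho^+=\rho_1<\hat\rho$ is an unstable asymptote as $x\to+\infty$, Lemma~\ref{lm:RS} rules out any non-constant profile on $x\ge0$ (on this half-line $V\equiv V^+$, so the equation reduces to the one treated in~\cite{RidderShen2018}). Hence we must have $P(x)\equiv\rho^+$ on $x\ge 0$. Taking this as Cauchy data, I would then solve the delay equation~\eqref{eq:dPx} backward in $x$ as an initial value problem, using the method of steps on the intervals $I_k=[(k-1)\ell,k\ell]$, $k=0,-1,-2,\dots$. Since the delay $L^P(x)-x=\ell/P(x)\ge\ell$ is strictly positive and the right-hand side of~\eqref{eq:dPx} is Lipschitz in the unknown once the data on $[x+\ell,\infty)$ is fixed, existence and uniqueness on each step are standard.

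The inductive hypothesis I would carry backward is:
\begin{equation}\notag
P'(x)\le 0,\qquad \rho^+\le P(x)<\rho^-\qquad\text{for all }x\ge k\ell.
\end{equation}
For monotonicity on $I_k$, note that $V$ is monotone increasing (since $V^-<V^+$) and by the inductive hypothesis $P$ is monotone decreasing, so $\phi\circ P$ is monotone increasing; hence $y\mapsto V(y)\phi(P^\ell_{\{x\}}(y))$ is monotone increasing, which yields $v^*(L^P(x);P^\ell_{\{x\}})\ge v^*(x;P^\ell_{\{x\}})$, and then~\eqref{eq:dPx} forces $P'(x)\le 0$. For the upper bound $P(x)<\rho^-$, I would argue by contradiction exactly as in Theorem~\ref{th3}: if $P(y)=\rho^-$ with $P(x)<\rho^-$ for all $x>y$, the monotonicity of $\phi$ gives $\phi(P^\ell_{\{z\}})\ge\phi(\rho^-)$ on the relevant integration window (with strict inequality on a set of positive measure), combined with $V(y)\le V^-$ on the window once $y$ is far enough to the left; this produces $v^*(z;P^\ell_{\{z\}})>V^-\phi(\rho^-)$ (with proper care near $x=0$, where the kernel straddles the jump of $V$). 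Integrating $1/v^*$ then yields a period $t_p(y)<\ell/\bar f$, contradicting Lemma~\ref{lm:ASF}.

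Finally, the asymptote is handled just as in Theorem~\ref{th3}: monotonicity and boundedness give a limit $\tilde\rho^-\defeq\lim_{x\to-\infty}P(x)$; Lemma~\ref{lm:ASF} forces $V^-f(\tilde\rho^-)=\bar f=V^-f(\rho^-)$, and since $f$ is strictly monotone on $[0,\hat\rho]$ and both $\tilde\rho^-,\rho^-\le\hat\rho$, we conclude $\tilde\rho^-=\rho^-$. The main obstacle I anticipate is a careful handling of the steps $I_0$ and $I_{-1}$ containing the discontinuity of $V$: there the kernel window in $v^*$ overlaps both $\{V=V^-\}$ and $\{V=V^+\}$, so the ``$v^*>V^-\phi(\rho^-)$'' estimate must be checked directly from the integral rather than by the clean pointwise monotonicity argument used far from the jump; however, since $V^+>V^-$ this overlap only makes $v^*$ larger, so the inequality in fact strengthens there, and the contradiction argument still closes.
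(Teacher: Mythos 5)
Your proposal is correct and follows essentially the same route as the paper, which simply states that subcase 2A is handled ``following similar arguments as in the proof for Theorem~\ref{th3}''; you have carried out exactly that adaptation, with the monotonicity and the contradiction inequality ($v^*>V^-\phi(\rho^-)$, hence $t_p(y)<\ell/\bar f$) correctly reversed relative to subcase 1A. Your observation that the upward jump $V^+>V^-$ only strengthens the $v^*$ estimate near $x=0$ is a sound (and slightly more explicit) treatment of the one point the paper leaves implicit.
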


A sample profile can be found in Figure~\ref{fig:2A} (left plot), 
using the same parameters and functions in~\eqref{eq:simp}-\eqref{eq:simp1A},
except for $V^-=1, V^+=2$. 
Similar to subcase 1A, since $\rho^+ < \hat\rho$ is an unstable asymptote
as $x\to\infty$, the profile $P$ does not attract nearby solutions 
of the FtLs model~\eqref{FtLs}. 
This is confirmed by the numerical evidence in Figure~\ref{fig:2A} (right plot)
for the solutions of~\eqref{FtLs}.

\begin{figure}[htbp]
\begin{center}
\includegraphics[height=3.2cm,clip,trim=1mm 2mm 8mm 7mm]{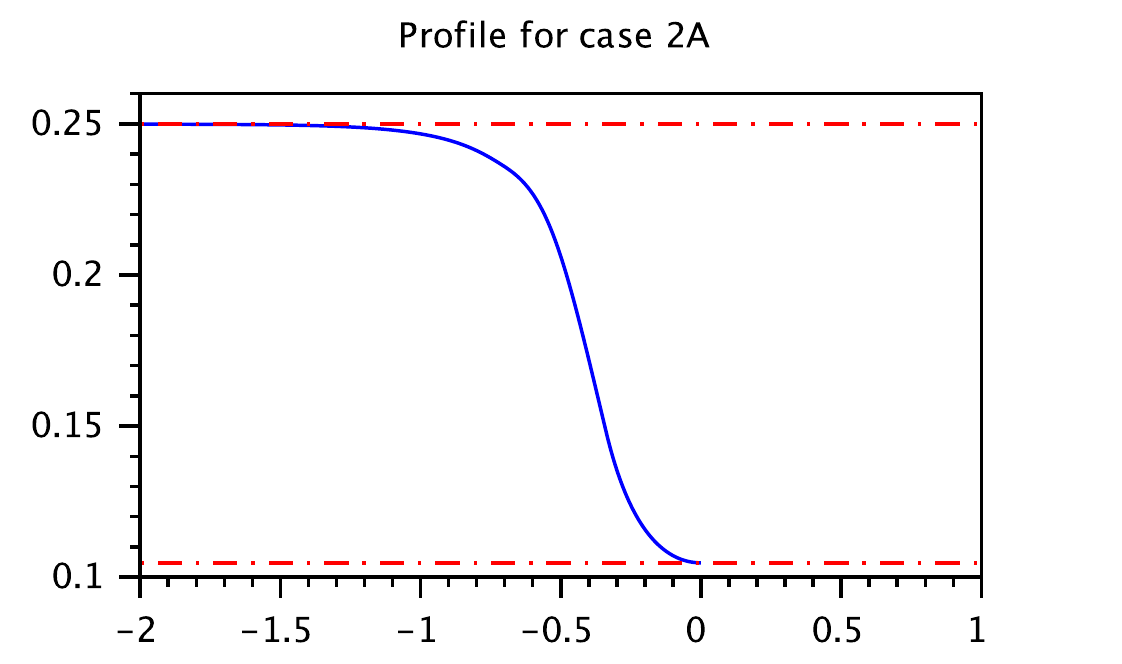}
$\qquad$
\includegraphics[height=3.2cm,clip,trim=1mm 2mm 10mm 7mm]{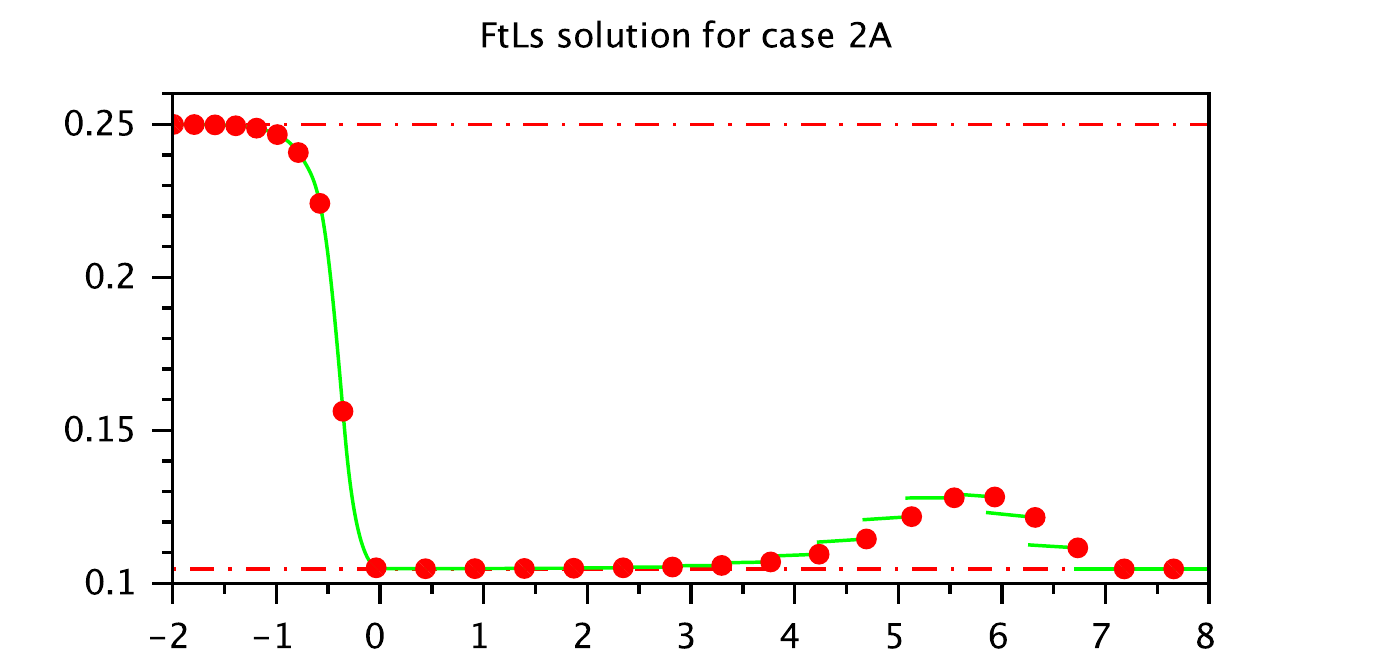}
\caption{Left: Typical profile $P$ for case 2A. Right: Solution of the FtLs model with Riemann-like initial condition.}
\label{fig:2A}
\end{center}
\end{figure}

\subsection{Subcase 2B:  with $0<\rho^- < \hat\rho< \rho^+<1$}

This is the counterpart for subcase 1B. 
Here both $\rho^-$ and $\rho^+$ are stable asymptotes at $x\to-\infty$ and $x\to\infty$ respectively. 
Similar to subcase 1B,  there are infinitely many stationary wave profiles.
However, due to the upward jump in $V$ at $x=0$, the profiles are no longer monotone,
resulting in more involving analysis. 

\begin{theorem}[Existence of profiles]\label{th5-2}
Given $V^\pm,\rho^\pm$ as in subcase 2B.
There exist infinitely many  stationary wave profiles $P$ 
which satisfies~\eqref{eq:dPx}-\eqref{eq:asymp}. 
These profiles are monotone on $x>0$, but might not be monotone on $x<0$. 
\end{theorem}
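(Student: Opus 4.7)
The plan is to adapt the strategy of Theorem~\ref{th1} (subcase 1B): parametrize stationary profiles by their values on $x\ge 0$, then extend backward to $x<0$ via the method of steps. The essential new difficulty is that the monotonicity argument from Theorem~\ref{th3} that was reused in subcase 1B breaks down here, because the upward jump in $V$ at $x=0$ can force $P'<0$ in a transition layer on the left of the origin.

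Step one: on $x\ge 0$, equation~\eqref{eq:dPx} reduces to the uniform-road profile equation with $V\equiv V^+$ studied in~\cite{RidderShen2018}. The fixed points of $f^+$ at level $\bar f$ are $\rho_1$ and $\rho_4$, and $\rho^+=\rho_4>\hat\rho$ is stable by Lemma~\ref{lm:RS}. Hence the stationary profiles on $x\ge 0$ with $P(+\infty)=\rho^+$ form a one-parameter family: the constant $P\equiv\rho_4$ together with all horizontal shifts of $W$. These are monotone increasing on $x\ge 0$ and parametrize the admissible ``initial data'' at $x=0$ by $P(0)\in(\rho_1,\rho_4]$.

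Step two: given such $P|_{x\ge 0}$, I extend to $x<0$ by the method of steps. Since the delay satisfies $L^P(x)-x=\ell/P(x)\ge\ell$, on each interval $I_k=[(k-1)\ell,k\ell)$ with $k\le 0$ equation~\eqref{eq:dPx} becomes a scalar ODE with Lipschitz right-hand side determined by the already known restriction of $P$ to $[k\ell,\infty)$, so local existence and uniqueness are standard. To iterate indefinitely I must verify $0<P(x)<1$ throughout. Here I cannot invoke monotonicity: for $x$ slightly negative, the integration window $[x,x+h]$ samples the larger speed $V^+$, so $v^*(L^P(x);P^\ell_{\{x\}})$ may exceed $v^*(x;P^\ell_{\{x\}})$, forcing $P'(x)<0$ near $x=0^-$. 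I will instead use the period constraint of Lemma~\ref{lm:ASF}: if $P$ were to reach some extremal value $\rho_\ast\in\{\rho_1,\rho_4\}$ at some $y<0$ while staying on one side between iterates, direct comparison of the integrand in~\eqref{eq:pp} would give $t_p(y)=\int_y^{L^P(y)}\frac{1}{v^*(x;P^\ell_{\{x\}})}\,dx\ne\ell/\bar f$, contradicting stationarity. This confines $P$ to $(\rho_1,\rho_4)$ throughout and in particular keeps $L^P$ well defined.

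Step three: for $x$ so negative that $[x,x+h]\subset(-\infty,0)$, the integrand in $v^*$ only involves $V^-$, and~\eqref{eq:dPx} reduces to the uniform $V^-$ profile equation treated in~\cite{RidderShen2018}. The unique stable fixed point of that equation inside the trapping window $(\rho_1,\rho_4)$ is $\rho^-=\rho_2$ by Lemma~\ref{lm:RS}, and a standard $\omega$-limit argument (equivalently, combining Lemma~\ref{lm:ASF} with Lemma~\ref{lm:RS} applied on $(-\infty,z^\flat)$) forces $P(x)\to\rho^-$ as $x\to-\infty$. Distinct choices of $P(0)$ yield distinct profiles by Lemma~\ref{lm:order}, so the family is infinite, and by construction these profiles are monotone on $x>0$ while in general non-monotone on $x<0$. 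The main obstacle, which I expect to be the technical heart of the proof, is step two: the loss of monotonicity in the transition layer requires the period-based global bound to replace the simple squeeze of Theorem~\ref{th3}, and a careful inspection of the nonlocal integral on the first few intervals $I_k$ straddling $x=0$ is needed to rule out $P$ approaching $0$ or $1$ in that layer.
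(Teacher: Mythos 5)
Your skeleton (parametrize the profiles by their restriction to $x\ge 0$, then extend backward by the method of steps and control the extension via the period identity) is the same as the paper's, but the step you dismiss as routine is exactly where the paper has to work hardest, and your argument for it does not close. In your Step three you claim that once $[x,x+h]\subset(-\infty,0)$ a ``standard $\omega$-limit argument'' combined with Lemma~\ref{lm:RS} forces $P(x)\to\rho^-$. Lemma~\ref{lm:RS} only characterizes the \emph{rate} of approach under the hypothesis that the limit exists; it does not exclude that the backward solution oscillates between $\rho^-$ and $\rho_3$ without converging, and there is no off-the-shelf $\omega$-limit machinery for this nonlocal delay equation solved backward. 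The paper's proof handles precisely this scenario with a quantitative argument: writing the period identity in the form $P(x)/\bar f=\mathrm{average}_{[x,L^P(x)]}\{1/v^*\}$ and using the strict concavity of $f$, it shows that for every $x<z^\flat$ there is a $y\in(x,L^P(x)+h)$ with $P(x)\bigl(1+C_f(P(x))\bigr)<P(y)$ for an explicit positive gap function $C_f$ vanishing only at $\rho^-$; applied to the sequence of local maxima this forces them to decrease strictly and converge to $\rho^-$. Without this (or an equivalent) estimate, the asymptotic condition~\eqref{eq:asymp} at $-\infty$ is not established for the non-monotone profiles.

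Two further points. First, your trapping claim in Step two is only half right: the comparison of $t_p(y)$ with $\ell/\bar f$ does rule out $P$ touching $\rho_1$ from above (since $V\le V^+$ and $\phi(P)\le\phi(\rho_1)$ give $v^*\le V^+\phi(\rho_1)=\bar f/\rho_1$), but at the upper value $\rho_4$ the same comparison only yields $v^*\ge V^-\phi(\rho_4)$, hence $t_p(y)\le \ell/f^-(\rho_4)$, which is \emph{larger} than $\ell/\bar f$ and gives no contradiction; the relevant upper barrier on $x<z^\flat$ is $\rho_3$, not $\rho_4$, and the transition layer needs separate care. Second, the paper obtains ``infinitely many'' profiles more cheaply than you propose: it selects the shifts of $W$ for which $P<\rho^-$ and $P$ is monotone decreasing on the whole interval $[z^\flat,0]$, so that the monotone argument of Theorem~\ref{th5} (subcase 2A) applies verbatim on $x<0$; the oscillatory analysis is then only needed to describe the remaining members of the family. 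You should either adopt that reduction or supply the concavity-plus-period estimate; as written, the proposal has a genuine gap.
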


\begin{proof}
\textbf{1.} On $x\ge 0$, $P$ must match some horizontal shift of $W$. 
Recall the definition~\eqref{defs:pf}. 
For any profile we have $P(x) > \rho_1$ on $x>0$.

Let $P^\flat$ be the unique  profile in Theorem~\ref{th5} for subcase 2A, 
with $P^\flat(x) = \rho_1$  on $x\ge 0$. 
The profile is monotone decreasing and satisfies the asymptotic condition 
$\lim_{x\to-\infty} P^\flat(x) =\rho^-$.
Furthermore, since $W(x) > \rho_1 $ on $x\ge 0$,
by the ordering property in Lemma~\ref{lm:order} we conclude that
all profiles $P$ will lie above $P^\flat$ also on  $x<0$. 
Thus, $P^\flat$ serves as a lower envelope for all profiles.

\textbf{2.}  
Consider the interval $I=[z^\flat, 0]$ where $L^P(z^\flat)=-h$. 
Using horizontally shifted versions of $W$ as initial condition for $P$ on $x\ge 0$,
we get various solutions of $P$ on $I$. 
By continuity there exist infinitely many profiles $P$ such that 
$P(x) < \rho^-$ for all $x\in I$, and $P$ is monotone decreasing on $I$. 
By a similar proof as for Theorem~\ref{th5},
one concludes the existence of infinitely many monotone stationary wave profiles on $x<0$.

\textbf{3.} 
It remains to show that, if a profile $P$ lies between $\rho^-$ and $\rho_3$ on $x<z^\flat$,
then we must have 
\begin{equation}\label{eq:LLL}
\lim_{x\to-\infty} P(x) = \rho^-.
\end{equation}

Indeed, we first observe that, if $P$ approaches a limit as $x\to-\infty$, 
then it must be either $\rho^-$ or $\rho_3$
according to the periodic property. 
If in addition $P$ is monotone, then since $\rho_3$ is an unstable asymptote,
then $P$ must be monotone increasing on $x<0$, and we conclude~\eqref{eq:LLL}.


\textbf{4.}
We are left  to consider  the case that $P$ is oscillatory
on $x<z^\flat$ between $\rho^-$ and $\rho_3$.
By the periodic property we derive that
\begin{equation}\label{eq:sss2}
\frac{P(x)}{\ell} \int_x^{x+\ell/P(x)} \frac{1}{v^*(z;P^\ell_{\{z\}})} dz= \frac{P(x)}{\bar f} 
= \frac{f(P(x))}{f(\rho^-)} \cdot \frac{1}{V^- \phi(P(x))}.
\end{equation}
Since $f$ is strictly concave, for any $\rho\in [\rho^-,\rho_3]$ we have the estimate
\begin{equation}\label{eq:sss3}
\frac{f(\rho)}{f(\rho^-)} = 1+ \frac{f(\rho)-f(\rho^-)}{\bar f} \ge 1+ C_f (\rho)
\end{equation}
where $C_f$ is a positive function, defined as 
\[
C_f (\rho) \defeq  \frac{1}{\bar f} \cdot  \min\left\{(\rho-\rho^-), (\rho_3 - \rho) \right\} \cdot
\min\left\{ \frac{f(\hat\rho)-f(\rho^-)}{\hat\rho-\rho^-}, \frac{f(\hat\rho)-f(\rho_3)}{\hat\rho-\rho_3}\right\}.
\]

By \eqref{eq:sss2}-\eqref{eq:sss3} we now have
\[
\frac{1}{V^-\phi(P(x))} \left(1+ C_f (P(x))\right)
\le
\mbox{average}_{y\in[x,L^P(x)]} \left\{\frac{1}{v^*(y;P^\ell_{\{y\}})} \right\}.
\]
Since $\phi$ is a monotone function, this implies that,
\begin{equation}\label{eq:sss}
\forall x<z^\flat, ~~\exists y\in(x,L^P(x)+h)~\mbox{ s.t. }~
P(x) (1+C_f(P(x)) < P(y),
\end{equation}
In particular, \eqref{eq:sss} holds also for the case where $x$ is a local maximum. 
Let $\{x_k\}$ be a sequence of local maximum such that
$x_{k+1} < x_k$ and $P(y) < P(x_k),  \forall y<x_k$.
By \eqref{eq:sss} we have that
\begin{equation}\label{eq:sss5}
P(x_{k+1}) (1+C_f(P(x_{k+1}))) \le P(x_k)\qquad \forall k,
\end{equation}
where $C_f(\rho)=0$ only when $\rho=\rho^-$. 
Thus,  these max values (if they exist) are strictly monotone. 
If  $\{x_k\}$ is a finite sequence, we conclude~\eqref{eq:LLL}.
Otherwise, if $\{x_k\}$ is an infinite sequence, 
then~\eqref{eq:sss5} implies that 
$\lim_{k\to\infty} P(x_k) = \rho^-$, concluding~\eqref{eq:LLL}.
\end{proof}


\textbf{Local Stability.}
The stability result and the proof for these profiles are 
the same as Theorem~\ref{th2} for subcase  1B,   and we omit the details. 
Sample profiles and numerical simulations for the FtLs model
are presented in Figure~\ref{fig:2B}, similar to subcase 1B. 

\medskip
\noindent\textbf{Subcases 2C and 2D.} 
Similar to the subcases 1C and 1D, there are no stationary profiles 
for subcases 2C and 2D. 
In Figure~\ref{fig:2C2D} 
we present similar numerical simulations for these subcases for 
the FtLs model~\eqref{FtLs}.

\begin{figure}[htbp]
\begin{center}
\includegraphics[height=3.2cm,clip,trim=1mm 2mm 8mm 7mm]{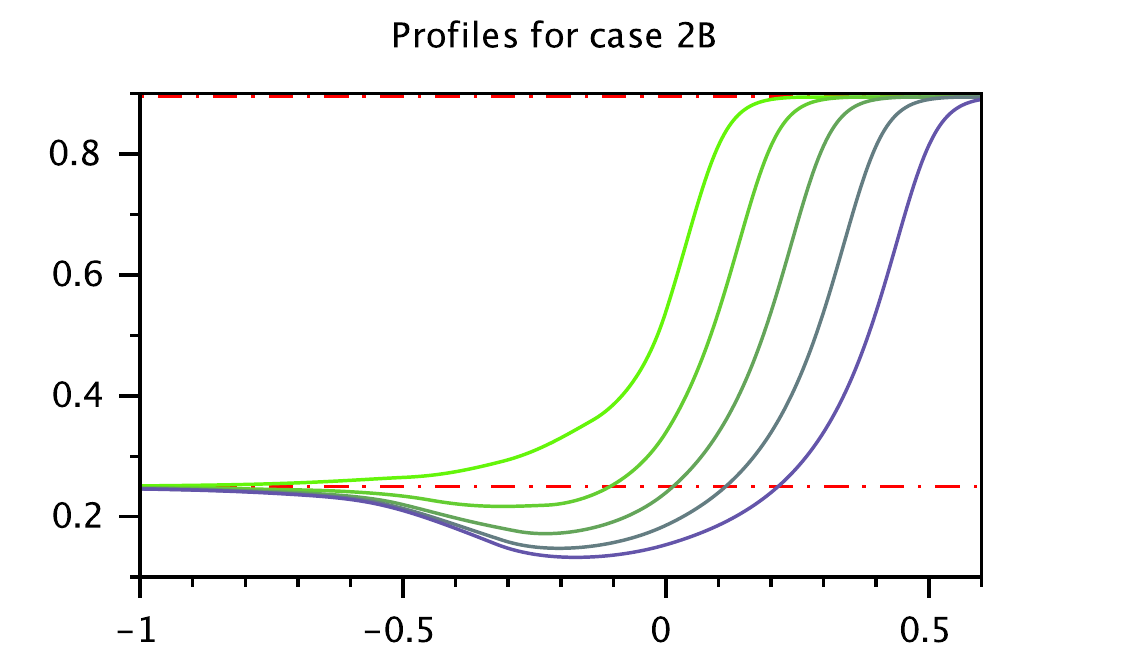}
$\qquad$
\includegraphics[height=3.2cm,clip,trim=1mm 2mm 8mm 7mm]{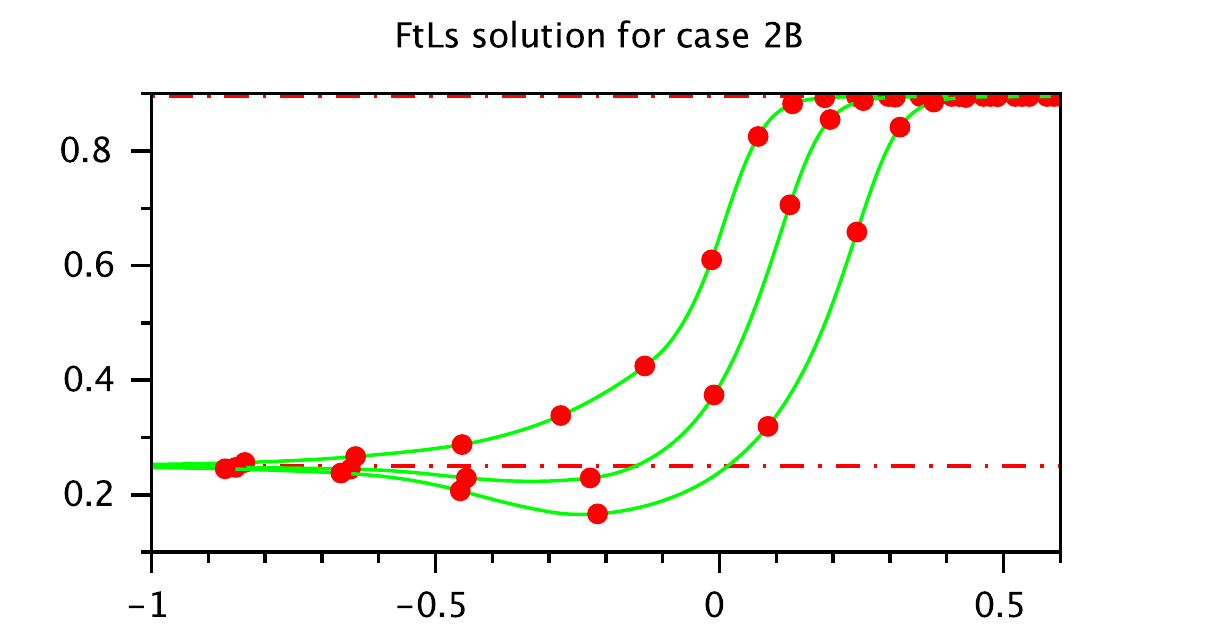}
\caption{Left: Typical profiles $P$ for subcase 2B. Right: Solution of the FtLs model with Riemann-like initial condition for subcase 2B.}
\label{fig:2B}
\mbox{}\\
\includegraphics[height=3.2cm,clip,trim=1mm 2mm 8mm 7mm]{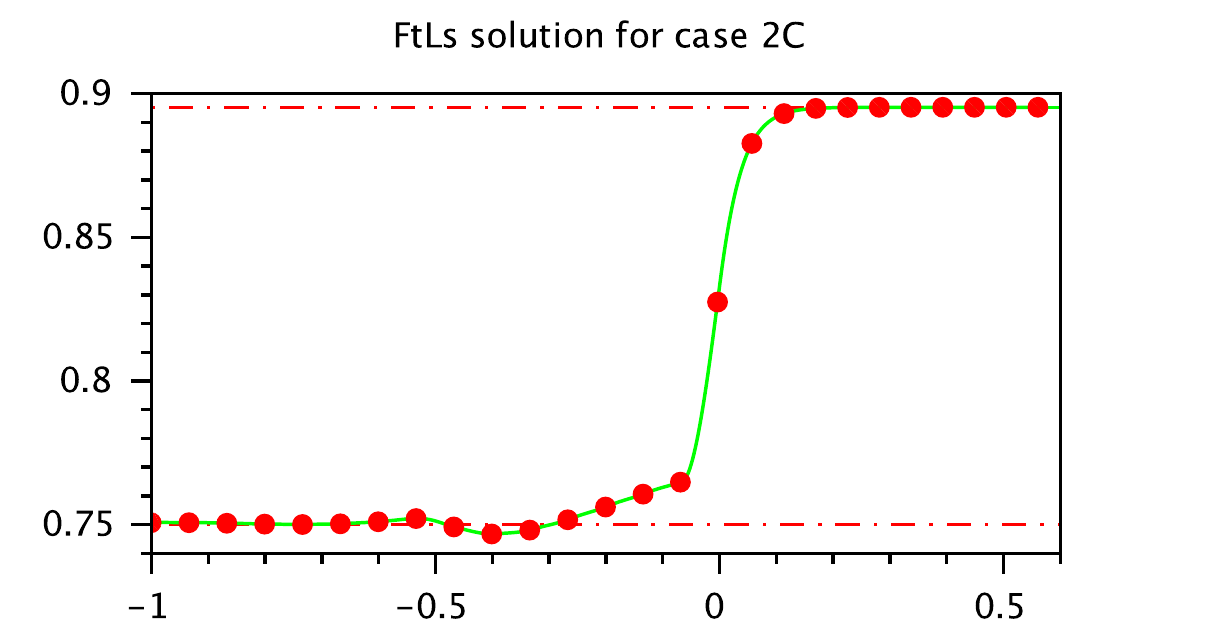}$\quad$
\includegraphics[height=3.2cm,clip,trim=1mm 2mm 8mm 7mm]{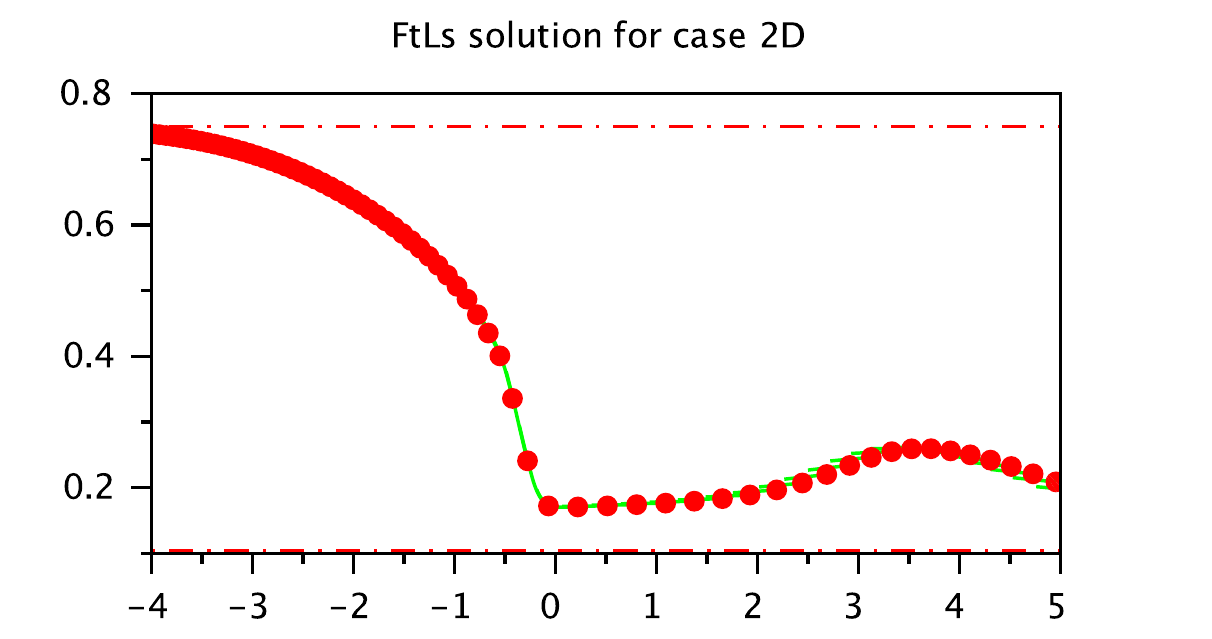}
\caption{Solution of the FtLs model with Riemann-like initial condition for subcase 2C (left plot)
and  subcase 2D (right plot).}
\label{fig:2C2D}
\end{center}
\end{figure}

\section{Convergence of stationary wave profiles to various limits}
\label{sec:conv}
\setcounter{equation}{0}

As mentioned in the introduction, when the road condition $V$ is discontinuous, 
rigorous analysis of  convergences for~\eqref{FtLs} 
to various limits as $\ell\to 0$ and/or $h\to 0$ are interesting open problems.
As a first step in this direction, we study two of these limits in the setting of 
stationary wave profiles.

\paragraph{Limit 1: Micro-macro limit.} 
Fix $h>0$.   Formally, as $\ell\to 0$,  the particle model~\eqref{FtLs} converges to 
the nonlocal PDE with discontinuous coefficient~\eqref{eq:clawNL}. 
Let $Q$ be a stationary profile for~\eqref{eq:clawNL} around $x=0$. 
Then, $Q$ satisfies the following integral equation with discontinous coefficient
\begin{equation}\label{eq:Q}
Q(x) \cdot \int_x^{x+h} V(y) \phi(Q(y)) w(y-x)\; dy = \bar f \qquad \forall x\in\mathbb{R}.
\end{equation}
With a slight abuse of notation, we denote the averaging operator as
\begin{equation}\label{eq:cAA}
\mathcal{A}(x; V, Q) \doteq \int_x^{x+h} V(y) \phi(Q(y)) w(y-x) \; dy.
\end{equation}
Taking the limit $x\to\pm\infty$ we get
\[
 \lim_{x\to\pm\infty} \mathcal{A}(x; V, Q)  
= V^- f^-(\rho^-) = V^+ f^+(\rho^+) = \bar f, 
\qquad \mbox{where}~
\rho^\pm =\lim_{x\to\pm\infty} Q(x).
\]

The stationary profiles $Q$ were studied in a recent work~\cite{ShenTR}, 
where results on existence, uniqueness, and stability are established,
in a similar setting as for the FtLs model~\eqref{FtLs}. 
For a given set of asymptotic value $\rho^\pm$, 
for subcases 1A and 2A, there exists a unique stationary wave profile $Q$, which
is not asymptotically stable. 
For subcases 1B and 2B, there 
exist infinitely many stationary wave profiles, which 
are local attractors for the solution of the Cauchy problems
of the conservation laws.
For all other subcases there are no stationary wave profiles. 
Sample profiles $Q$ for subcases 1B and 2B 
are shown in Figure~\ref{fig:Qs}, taken from~\cite{ShenTR}, for comparison.
Furthermore, when $V(x)\equiv 1$, 
the micro-macro limit of the traveling wave profiles is proved in~\cite{RidderShen2018}.

\begin{figure}
\begin{center}
\begin{tabular}{cc}
subcase 1B & subcase 2B \\
\includegraphics[width=6cm,height=3.3cm]{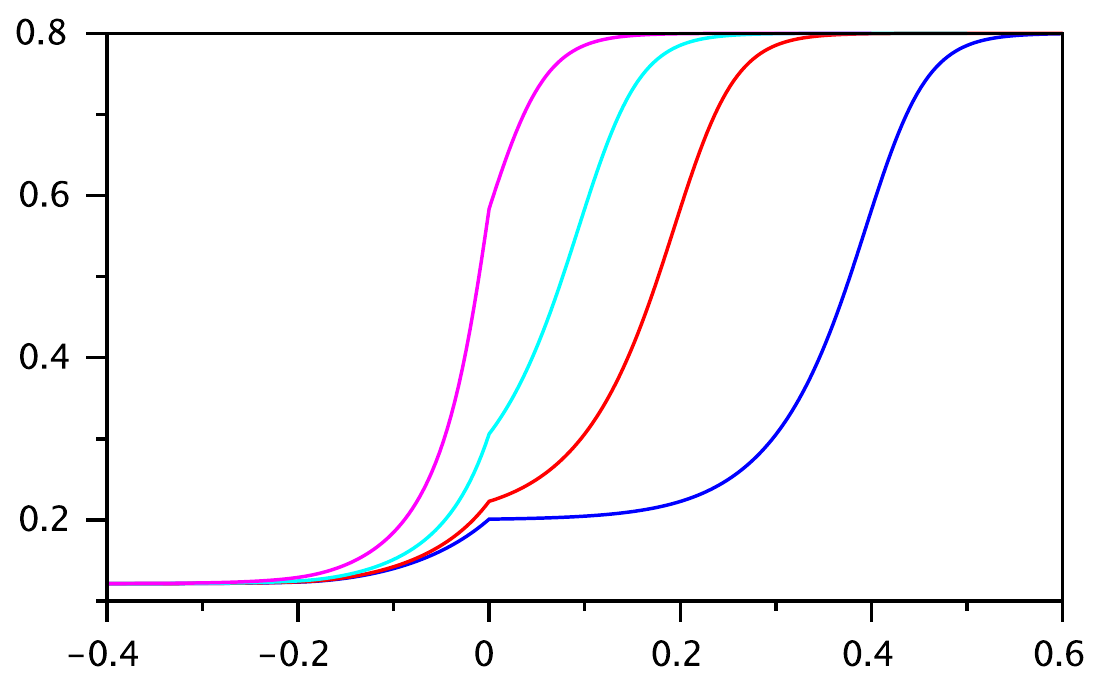}$\quad$
& 
\includegraphics[width=6cm,height=3.3cm]{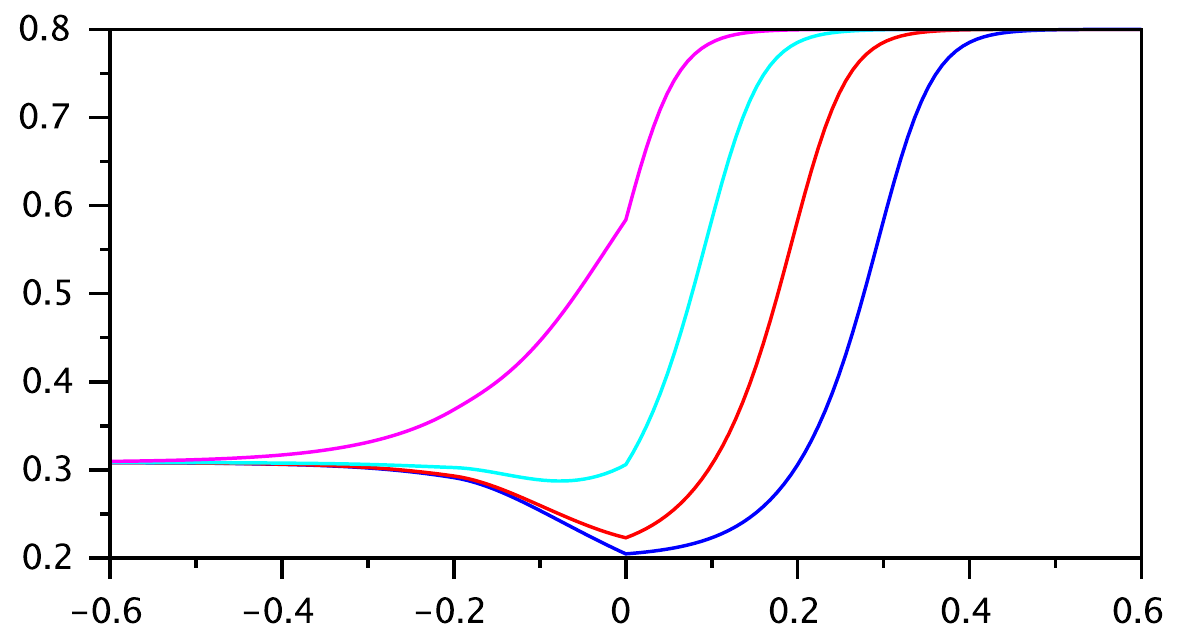}
\end{tabular}
\caption{Sample stationary wave profiles $Q$ for the nonlocal conservation law, taken from~\cite{ShenTR}.}
\label{fig:Qs}
 \end{center}
 \end{figure}

We consider subcases 1A, 1B, 2A, and 2B,  where stationary wave profiles exist. 

\begin{theorem}\label{thC-A}
Fix $h>0$ and $\bar f$. 
Let $V^\pm,\rho^\pm$ be given such that we are in one of the
subcases 1A, 1B, 2A, and 2B.  
Let $\mathcal{P}^\ell$ be a stationary wave profile for the FtLs models 
such that  $\mathcal{P}^\ell(0)=\rho_0$.
%
Then, as $\ell\to 0+$, the sequence $\{\mathcal{P}^\ell\}$ converges to a limit profile $Q$.
The profile $Q$ is a stationary profile for the conservation law~\eqref{eq:clawNL},
with $Q(0)=\rho_0$. 
\end{theorem}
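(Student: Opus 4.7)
The plan is to proceed by compactness followed by identification of the limit equation. First I would establish uniform (in $\ell$) bounds. In each of the four relevant subcases, the existence results in Sections~\ref{sec:case1} and~\ref{sec:case2} confine $\cP^\ell$ to a compact subset of $(0,1)$ (bounded above and below away from $0$ and $1$); consequently $\phi(\cP^\ell)$ is bounded below by a positive constant independent of $\ell$, and the denominator $v^*(\cdot;\cP^{\ell,\ell}_{\{\cdot\}})\ge v^*_{\min}>0$ uniformly. Next I would derive a uniform Lipschitz bound on $\cP^\ell$ by exploiting that the delay $L^{\cP^\ell}(x)-x=\ell/\cP^\ell(x)$ is of order $\ell$. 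Since $x\mapsto v^*(x;\mu)$ is Lipschitz with a constant depending only on $\|V\|_\infty$, $\|\phi\|_\infty$ and $\|w\|_{W^{1,\infty}}$ (the jump of $V$ at $0$ only produces a bounded contribution as it enters the integration window), substitution into~\eqref{eq:dPx} gives
\[
|(\cP^\ell)'(x)|\;\le\;\frac{\cP^\ell(x)^2}{\ell\cdot v^*_{\min}}\cdot C\cdot\frac{\ell}{\cP^\ell(x)}\;\le\;C',
\]
uniformly in $\ell$ and $x$.

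With equicontinuity and pointwise boundedness, Arzel\`a--Ascoli extracts a subsequence $\cP^{\ell_n}\to Q$ uniformly on compact sets. The same Lipschitz control shows that the associated piecewise constant function $\cP^{\ell_n,\ell_n}_{\{x\}}$, whose pieces have length at most $\ell_n/\rho_{\min}\to 0$, also converges to $Q$ uniformly on compact sets; hence $v^*(y;\cP^{\ell_n,\ell_n}_{\{y\}})\to\mathcal{A}(y;V,Q)$ uniformly on compact sets by dominated convergence. To identify the equation satisfied by $Q$, I would pass to the limit in the periodicity identity of Lemma~\ref{lm:ASF}: for every $x\in\mathbb{R}$,
\[
\frac{\bar f}{\ell_n}\int_x^{L^{\cP^{\ell_n}}(x)}\frac{1}{v^*(z;\cP^{\ell_n,\ell_n}_{\{z\}})}\,dz=1.
\]
The left hand side equals $\bar f/\cP^{\ell_n}(x)$ times an average of $1/v^*$ over an interval of length $\ell_n/\cP^{\ell_n}(x)\to 0$; the average converges to $1/\mathcal{A}(x;V,Q)$, so in the limit $Q(x)\,\mathcal{A}(x;V,Q)=\bar f$ for every $x$, which is exactly~\eqref{eq:Q}.

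Finally, the normalization $\cP^\ell(0)=\rho_0$ passes to the limit to give $Q(0)=\rho_0$. In subcases~1A and~2A the profile of the nonlocal conservation law must be constant on $x\ge 0$ (since $\rho^+$ is an unstable asymptote there), and the backward extension is uniquely determined by a continuous method of steps analogous to Theorem~\ref{th3}. In subcases~1B and~2B the family of profiles of~\eqref{eq:clawNL} constructed in~\cite{ShenTR} obeys a non-crossing property analogous to Lemma~\ref{lm:order}, so the normalization $Q(0)=\rho_0$ selects a unique element. Every subsequential limit therefore equals this $Q$, and the whole family $\cP^\ell$ converges to $Q$. The main difficulty I anticipate is establishing the uniform Lipschitz bound insensitively to the jump of $V$ at $x=0$; in subcase~2B a related issue is ruling out excursions of $\cP^\ell$ close to $0$ or $1$ during oscillations on $x<0$, so that $v^*_{\min}>0$ stays uniform in $\ell$.
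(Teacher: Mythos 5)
Your proposal follows essentially the same route as the paper: equicontinuity plus Arzel\`a--Ascoli to extract a uniform limit, then passage to the limit in the periodicity identity of Lemma~\ref{lm:ASF} (the average of $1/v^*$ over the shrinking interval $[x,L^{\cP^\ell}(x)]$) to identify the limit equation~\eqref{eq:Q}. You supply some details the paper leaves implicit --- the explicit uniform Lipschitz bound justifying equicontinuity, and the upgrade from subsequential to full convergence via uniqueness of the normalized limit profile --- but the argument is the same one.
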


\begin{proof}
On $x\ge 0$, since $V(x)\equiv V^+$ is constant, the micro-macro convergence
follows from the result in~\cite{RidderShen2018}.
%
On $x\le 0$, 
since the set of functions $\{\mathcal{P}^\ell\}$  is equicontinuous,
by the Arzel\`{a}-Ascoli Theorem, as $\ell\to 0$
the sequence $\{\mathcal{P}^\ell\}$ converges to a limit function $\hat P$ 
uniformly on bounded sets. 
Furthermore, since the asymptotic condition 
$\lim_{x\to-\infty}\mathcal{P}^\ell(x)=\rho^-$ is satisfied for all $\ell$, 
we conclude that the convergence is uniform for all $x\in\mathbb{R}$. 

It remains to show that $\hat P(x) \equiv Q(x)$ for $x\le0$, 
i.e.,  $\hat P$ satisfies the equation~\eqref{eq:Q}.
Indeed, let $P^\ell_{\{x\}}$ be the piecewise constant function generated by $\mathcal{P}^\ell$
according to~\eqref{eq:Pell},
using the periodicity~\eqref{eq:pp} we get
\[
\frac{\mathcal{P}^\ell(x)}{\bar f} 
=
\left( \frac{\ell}{\mathcal{P}^\ell(x)}\right)^{-1} 
\int_x^{x+\ell/\mathcal{P}^\ell(x)} \frac{1}{v^*(z; P^\ell_{\{z\}})}\; dz 
=
\substack{\text{\normalsize average}\\ {\scriptstyle z\in[x,x+\ell/\mathcal{P}^\ell(x)]} }
\left\{\frac{1}{v^*(z; P^\ell_{\{z\}})}\right\}.
\]
%
Taking the limit $\ell\to 0$, the above equation gives
\[
\frac{\hat P(x)}{\bar f} = \lim_{\ell\to 0} \left[\substack{\text{\normalsize average}\\ {\scriptstyle z\in[x,x+\ell/\mathcal{P}^\ell(x)]} }
\left\{\frac{1}{v^*(z;  P^\ell_{\{z\}})}\right\} \right]
= \frac{1}{ \mathcal{A}(x; V, \hat P)}
\qquad \forall x<0,
\]
which is exactly the equation~\eqref{eq:Q},
completing the proof. 
%
\end{proof}

\paragraph{Limit 2: Nonlocal to local.}
Fix $\ell>0$ and let $h\to 0$, we formally 
obtain the local particle model~\eqref{eq:FtL}. 
The stationary profile $U$ satisfies
a discontinuous delay differential equation (DDDE)
\begin{equation}\label{eq:U}
{\color{black}
U'(x) = \frac{U(x)^2}{\ell V(x) \phi(U(x))} \cdot \left[ V(x) \phi(U(x)) - V(x^\sharp) \phi(U(x^\sharp)) \right] , 
\qquad x^\sharp = x + \frac{\ell}{U(x)}.}
\end{equation}
The DDDE is studied in detail in~\cite{ShenDDDE2017}, where similar results 
on existence, uniqueness and stability are established. 
Typical profiles for case  1B and 2B are shown in Figure~\ref{fig:Us},
taken from~\cite{ShenDDDE2017}. 
We have a similar convergence result.

\begin{figure}
\begin{center}
\begin{tabular}{cc}
subcase 1B & subcase 2B \\
\includegraphics[width=6cm,height=3.5cm]{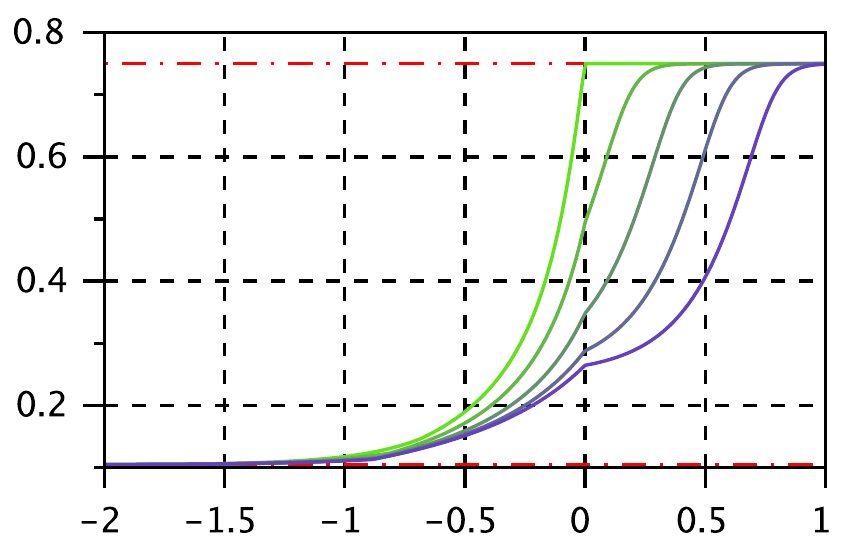} &
\includegraphics[width=6cm,height=3.5cm]{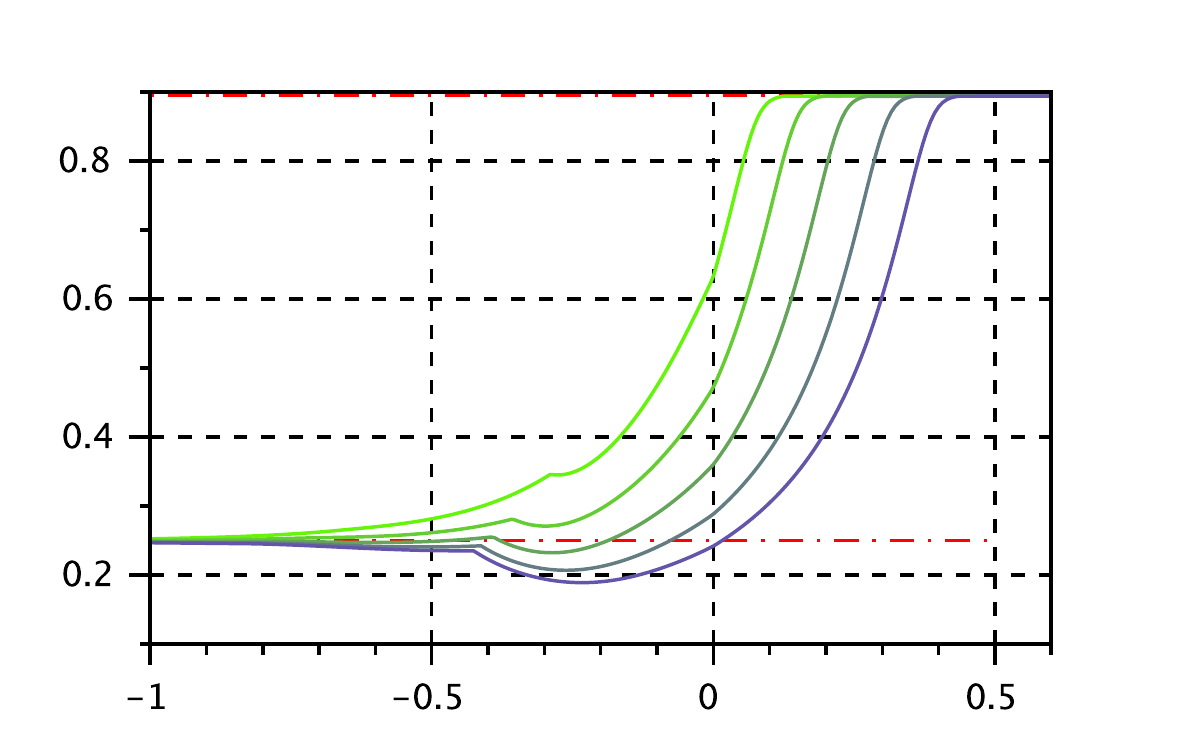}
\end{tabular}
\caption{Sample stationary wave profiles $U$ for local particle model, taken from~\cite{ShenDDDE2017}. }
\label{fig:Us}
 \end{center}
 \end{figure}

\begin{theorem}\label{thm:CB}
Fix $\ell>0, \bar f>0$ and let $h>0$ be given. 
Let $V^\pm, \rho^\pm$ satisfy the conditions
for subcases 1A, 1B, 2A, or 2B.
Denote by $\{\mathcal{P}^h\}$ the set of profiles for various $h$ such that 
$\mathcal{P}^h(0)=\tilde \rho$ for all $h$.
Then, as $h\to 0$, the sequence $\{\mathcal{P}^h\}$ converges to the profile $U$
which satisfies~\eqref{eq:U}, the asymptotic conditions
$\lim_{x\to\pm\infty}U(x)=\rho^\pm$, and 
$U(0) = \tilde\rho$.
\end{theorem}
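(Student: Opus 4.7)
The plan is to adapt the argument of Theorem~\ref{thC-A}, this time letting $h\to 0$ with $\ell$ fixed. I will (i) establish uniform bounds and equi-Lipschitz continuity of $\{\mathcal{P}^h\}$, (ii) extract a uniformly convergent subsequence via Arzel\`a--Ascoli, (iii) show that away from a shrinking neighborhood of the discontinuity of $V$ the nonlocal profile equation~\eqref{eq:dPx} collapses \emph{exactly} to the local DDDE~\eqref{eq:U}, and (iv) pass to the limit and invoke the uniqueness of~\eqref{eq:U} to identify the limit with $U$.

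For compactness, observe that in each of subcases 1A, 1B, 2A, 2B the asymptotic data $\rho^\pm$ and the auxiliary values $\rho_1,\ldots,\rho_4$ depend only on $V^\pm$ and $\bar f$, not on $h$. Consequently $\mathcal{P}^h$ takes values in a fixed compact subinterval $[\rho_{\min},\rho_{\max}]\subset(0,1)$ independent of $h$, so $v^*(x;(\mathcal{P}^h)^\ell_{\{x\}}) \in [m,M]\subset(0,\infty)$ uniformly, and~\eqref{eq:dPx} yields $|(\mathcal{P}^h)'|\le C/\ell$ uniformly in $h$. A diagonal extraction gives a subsequence (not relabeled) converging uniformly on compact sets to a Lipschitz limit $\mathcal{P}^*$ with $\mathcal{P}^*(0)=\tilde\rho$. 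The asymptotic identifications $\mathcal{P}^*(\pm\infty)=\rho^\pm$ follow from the monotonicity and decay structure developed in Sections~\ref{sec:case1}--\ref{sec:case2} (in particular the oscillation estimate~\eqref{eq:sss5}), which furnishes decay rates independent of $h$.

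The key observation for identifying the limit equation is that for each fixed $x$, the point $x$ is an atom of the piecewise constant function $(\mathcal{P}^h)^\ell_{\{x\}}$, whose first cell $[x,L^{\mathcal{P}^h}(x))$ has length $\ell/\mathcal{P}^h(x) \ge \ell/\rho_{\max}$. Once $h < \ell/\rho_{\max}$ and $V$ is constant on $[x,x+h]$ (which holds for $x\notin[-h,0)$), the integrand in $v^*$ reduces to a product with $\phi(\mathcal{P}^h(x))$ pulled out, giving the \emph{exact} identity
\[
v^*(x;(\mathcal{P}^h)^\ell_{\{x\}}) = \phi(\mathcal{P}^h(x))\int_x^{x+h} V(y)\,w(y-x)\,dy = V(x)\phi(\mathcal{P}^h(x)),
\]
together with the analogous identity at the leader's position when $V$ is constant on $[L^{\mathcal{P}^h}(x),L^{\mathcal{P}^h}(x)+h]$. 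Hence, outside a shrinking $O(h)$-neighborhood of the two critical points $x=0$ and $x=x^\flat_h$ (with $L^{\mathcal{P}^h}(x^\flat_h)=0$), $\mathcal{P}^h$ satisfies~\eqref{eq:U} exactly.

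To conclude, fix any compact set $K\subset\mathbb{R}\setminus\{0,x^\flat_0\}$, where $L^{\mathcal{P}^*}(x^\flat_0)=0$. For all sufficiently small $h$, the local DDDE~\eqref{eq:U} holds for $\mathcal{P}^h$ on $K$, and the uniform convergences $\mathcal{P}^h\to\mathcal{P}^*$ and $L^{\mathcal{P}^h}\to L^{\mathcal{P}^*}$, combined with continuity of $\phi$ and $V$ on $K$, allow us to pass to the limit and obtain~\eqref{eq:U} for $\mathcal{P}^*$ on $K$. The well-posedness of~\eqref{eq:U} with prescribed data at $x=0$, by the method of steps in~\cite{ShenDDDE2017}, then forces $\mathcal{P}^* \equiv U$; since the limit is independent of the extracted subsequence, the full family converges. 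The main obstacle is the matching across $x=0$: within an $O(h)$ window the interval $[x,x+h]$ straddles the jump of $V$ and the exact reduction breaks down. One handles this by exploiting the fact that on $x\ge 0$ the profile $\mathcal{P}^h$ is an exact horizontal shift of the uniform-road profile $W$ of~\cite{RidderShen2018} (which is $h$-independent up to the shift) and that the shift is pinned down by the normalization $\mathcal{P}^h(0)=\tilde\rho$; together with the equicontinuity this forces both one-sided limits of $\mathcal{P}^*$ at $x=0$ to agree with the corresponding one-sided values of $U$.
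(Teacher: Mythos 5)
Your overall strategy coincides with the paper's: establish equicontinuity, extract a uniform limit by Arzel\`a--Ascoli, identify the limit equation as \eqref{eq:U}, and pin the limit down by the data at $x=0$ and the asymptotics. Where you differ is in step (iii): the paper simply passes to the limit $w\to\delta_0$ inside the averaging operator, obtaining $\mathcal{A}(x;V,\mathcal{P}^h)\to V(x)\phi(\tilde P(x))$ pointwise at \emph{every} fixed $x$ (including $x=0$ and $x^\flat$, since for fixed $x$ the interval $[x,x+h]$ eventually meets at most the one-sided value of $V$ at $x$), whereas you observe that for $h<\ell$ the first cell of $(\mathcal{P}^h)^\ell_{\{x\}}$ already covers $[x,x+h]$, so \eqref{eq:dPx} collapses to \eqref{eq:U} \emph{exactly} away from an $O(h)$-neighborhood of $\{0,x^\flat_h\}$. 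That exact reduction is a genuine sharpening and is correct; but it creates for you an artificial matching problem at $x=0$ that the paper's pointwise argument never encounters, and your proposed fix contains an error: the uniform-road profile $W$ of \cite{RidderShen2018} is the stationary profile of the \emph{nonlocal} model and certainly depends on $h$ (it solves an equation whose kernel has support $[0,h]$), so it is not ``$h$-independent up to the shift.'' Fortunately the detour is unnecessary: the uniform limit of an equicontinuous family is continuous, so $\mathcal{P}^*$ has no mismatched one-sided limits at $0$; since $\mathcal{P}^*$ satisfies \eqref{eq:U} on $x>0$ and on $x<0$ (the two excluded points are removable by continuity of both sides of \eqref{eq:U} off the jump of $V$), the normalization $\mathcal{P}^*(0)=\tilde\rho$ and backward well-posedness of \eqref{eq:U} identify $\mathcal{P}^*$ with $U$, exactly as you argue. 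One further caveat, shared with the paper: the passage from local uniform convergence to the asymptotic conditions $\lim_{x\to\pm\infty}\mathcal{P}^*(x)=\rho^\pm$ needs uniformity in $h$ of the decay, which you assert via \eqref{eq:sss5} but do not actually verify; you should either prove an $h$-uniform tail estimate or argue as the paper does for Theorem~\ref{thC-A}.
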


\begin{proof}
The proof is very similar to the proof of Theorem~\ref{thC-A}.
Since the sequence $\{\mathcal{P}^h\}$ is equicontinuous, 
by the Arzel\`{a}-Ascoli Theorem
it converges to a limit function $\tilde P$ uniformly on bounded sets, as $h\to 0$.
To show that $\tilde P \equiv U$, we observe that, as $h\to 0$, 
$w\to \delta_0$ (a Dirac delta at the origin), and we have 
\[
\lim_{h\to 0} \mathcal{A}(x;V,\mathcal{P}^h) = V(x) \phi( \tilde P(x)), \qquad
\lim_{h\to 0} \mathcal{A}(L^{\mathcal{P}^h}(x);V,\mathcal{P}^h) = V(L^{\tilde P}(x)) \phi( \tilde P(L^{\tilde P}(x))),
\]
for every $x\in\mathbb{R}$. Recall that $L^{\tilde P}(x) = x+\frac{\ell}{\tilde P(x)}$. 
Thus, by~\eqref{eq:dPx} we conclude that 
$\tilde P$ satisfies the equation~\eqref{eq:U},
as well as the asymptotic conditions and the condition at $x=0$,
completing the proof. 
\end{proof}

\section{Concluding remarks}\label{sec:final}
\setcounter{equation}{0}

In this paper we study stationary wave profiles for a nonlocal particle model of
traffic flow on rough roads where the speed limit function $V$ is discontinuous at $x=0$. 
We establish results on the existence, unique, and stability for 
these profiles for all cases.

As comparison, 
we present another nonlocal particle model where
\begin{equation}\label{FtLs-2}
 \dot z_i(t)  = V(z_i) \cdot \phi(\rho^*(t,z_i)),\qquad
 \mbox{where}\quad
 \rho^*(t,z_i) \defeq  
 \int_{z_i}^{z_i+h} \rho^\ell(t,y) w(y-z_i)\, dy,
 \end{equation}
and $\rho^\ell$  is the piecewise constant function defined in~\eqref{eq:rhoell}. 
Here the weighted average is taken over the discrete density function.

When the road condition is uniform with $V(x) \equiv 1$, 
travelling wave profiles are studied in~\cite{RidderShen2018}. 
In the case of rough road condition,
similar analysis on stationary wave profiles 
can be carried out
and similar results can be proved, with small changes in the detail.
In Figure~\ref{fig:M1s} we present sample profiles for subcases 1A, 1B, 2A and 2B.


Unfortunately, model~\eqref{FtLs-2} has a fatal flaw.
In certain situations the model leads to traffic accidents, 
where the discrete density
$\rho_i(t)$ becomes bigger than 1, even with initial density less than 1.
See Figure~\ref{fig:crashes} for two numerical simulations that demonstrate
this scenario, where we use $(V^-, V^+)=(2, 1)$ and  the initial conditions
\[\mbox{left plot:} ~
\rho_i(0) =  \begin{cases} 0.9 &\mbox{if}~z_i(0) <0,\\
  0.75& \mbox{if}~z_i(0) >0,
  \end{cases}
\qquad
\mbox{right plot:} ~
\rho_i(0) =  \begin{cases} 0.9 &\mbox{if}~z_i(0) <0,\\
  0.25& \mbox{if}~z_i(0) >0.
  \end{cases}
\]
We observe that,   at the origin  $\rho_i(t)$ 
has a peak  larger than 1, for some $t>0$.

\begin{figure}
\begin{center}
\begin{tabular}{cc}
subcases 1A and 1B & subcases 2A and 2B \\
\includegraphics[width=6cm,height=3.5cm]{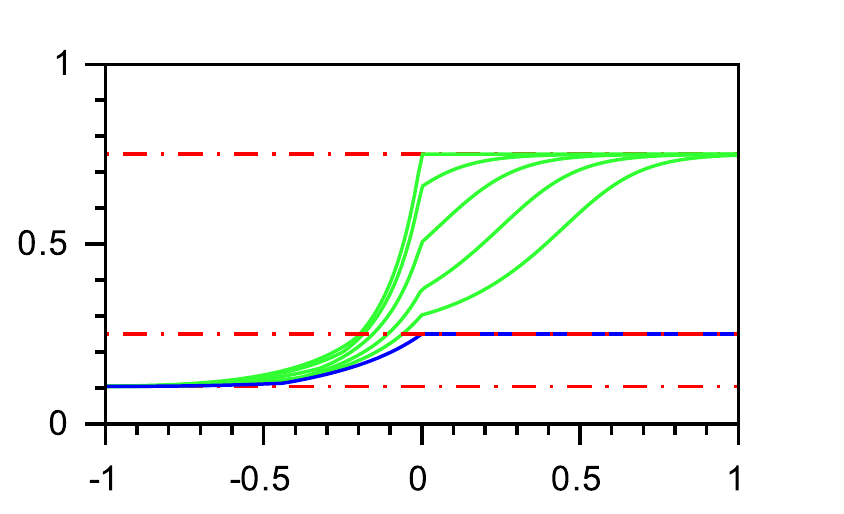}&
\includegraphics[width=6cm,height=3.5cm]{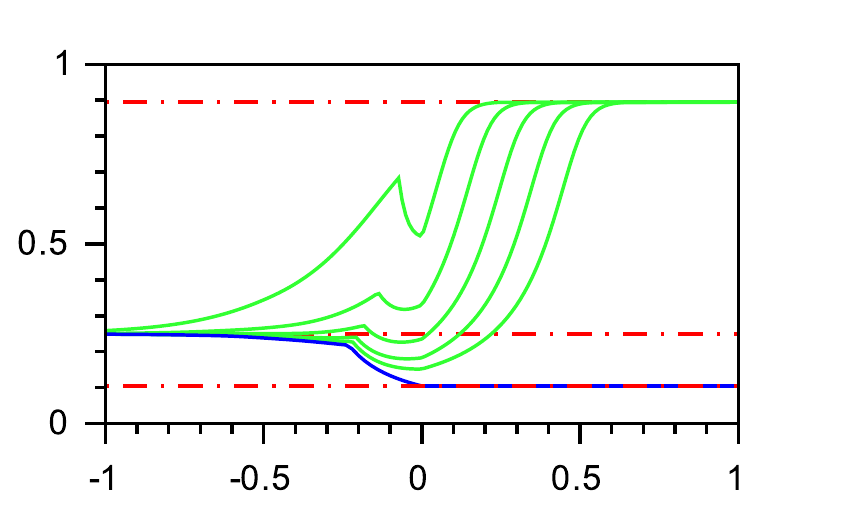}\\
\end{tabular}
\caption{Typical profiles for the alternative model, for various subcases.}
\label{fig:M1s}

\vskip 5mm 

\begin{tabular}{cc}
\includegraphics[width=5.5cm,height=3.5cm]{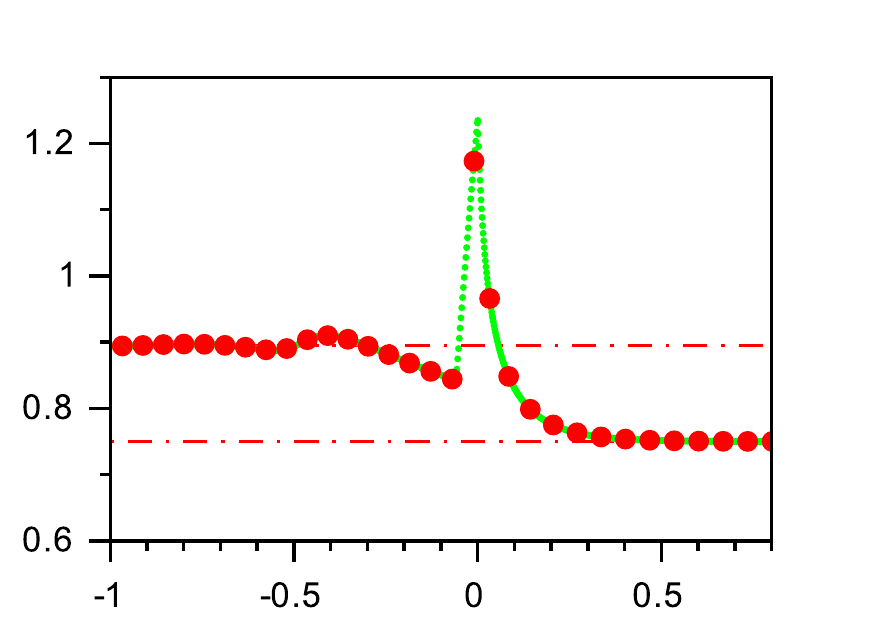}&$\quad$
\includegraphics[width=5.5cm,height=3.5cm]{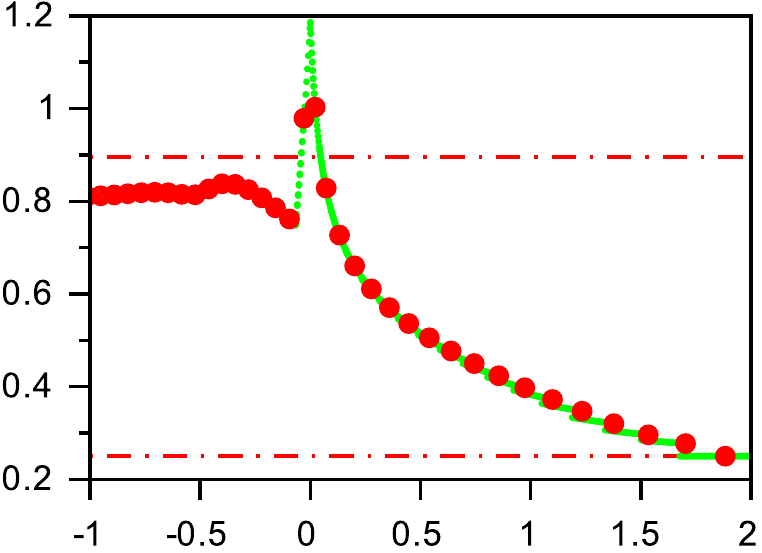}\\
\end{tabular}
\caption{Two simulations at $t=1$, showing that, when $V^->V^+$, 
car density becomes bigger than 1 as time grows,
indicating cars crashing.}
\label{fig:crashes}
 \end{center}
 \end{figure}


Numerical simulations for generating the plots used in this paper are carried out in Scilab.
The source codes can be found  at:
 
   \emph{www.personal.psu.edu/wxs27/SIM/Traffic-CS-2019} 

\medskip
\noindent\textbf{Acknowledgement.}  The authors are grateful to the anonymous reviewer for
useful remarks that led to an improvement of the manuscript. 
 


\begin{thebibliography}{99}

\bibitem{AggarwalColomboGoatin2015}
  \newblock A.~Aggarwal, R.~M.~Colombo, P.~Goatin.
  \newblock Nonlocal systems of conservation laws in several space dimensions.
  \newblock \emph{SIAM J.~Numer.~Anal.}, \textbf{53} {(2015)}, 963--983.
  

\bibitem{AggarwalGoatin2016}
  \newblock A.~Aggarwal, P.~Goatin.
  \newblock Crowd dynamics through nonlocal conservation laws.
  \newblock \emph{B.~Braz.~Math.~Soc.}, \textbf{47} {(2016)}, 37--50.

%
%
%
%
%
%
  
\bibitem{BG2016}
  \newblock S.~Blandin, P.~Goatin.
  \newblock Well-posedness of a conservation law with nonlocal flux arising in traffic flow modeling.
 \newblock \textit{Numer.~Math.} \textbf{132} (2016), 217--241.

%
%
%
%
%
%
%



\bibitem{ChenChristoforou2007}
  \newblock G.-Q.~Chen, C.~Christoforou.
  \newblock Solutions for a nonlocal conservation law with fading memory.
  \newblock \textit{Proc.~Amer.~Math.~Soc.} \textbf{135} (2007), 3905--3915. 



\bibitem{CCS2019}
\newblock M.~Colombo, G.~Crippa, L.V.~Spinolo. 
\newblock On the Singular Local Limit for Conservation Laws with Nonlocal Fluxes.
\newblock \textit{Arch.~Ration.~Mech.~Anal.} \textbf{233} (2019), no 3, 1131--1167.

\bibitem{CCS2018}
\newblock M.~Colombo, G.~Crippa, L.V.~Spinolo.
\newblock Blow-up of the total variation in the local limit of a nonlocal traffic model.
\newblock Preprint 2018, arxiv:1902.06970.

\bibitem{CCS2019P}
\newblock M.~Colombo, G.~Crippa, M.~Graff, L.V.~Spinolo.
\newblock On the role of numerical viscosity in the study of the local limit of nonlocal conservation laws.
\newblock Preprint 2019, arxiv:1902.07513.


 
\bibitem{ColomboLecureuxMercier2011}
  \newblock R.~M.~Colombo, M.~L\'{e}cureux-Mercier.
  \newblock Nonlocal crowd dynamics models for several populations.
  \newblock \emph{Acta Math.~Sci.},  \textbf{32} {(2012)}, 177--196.

\bibitem{ColomboGaravelloLecureuxMercier2011}
  \newblock R.~M.~Colombo, M.~Garavello, M.~L\'{e}cureux-Mercier.
  \newblock Nonlocal crowd dynamics.
  \newblock \emph{C.~R.~Acad.~Sci.~Paris, Ser.~I}, \textbf{349} {(2011)}, 769--772.

\bibitem{ColomboGaravelloLecureuxMercier2012}
  \newblock R.~M.~Colombo, M.~Garavello, M.~L\'{e}cureux-Mercier.
  \newblock A class of nonlocal models for pedestrian traffic.
  \newblock \emph{Math.~Models Methods Appl.~Sci.}, \textbf{22} {(2012)}.

\bibitem{ColomboMarcelliniRossi2016}
  \newblock R.~M.~Colombo, F.~Marcellini, E.~Rossi.
  \newblock Biological and industrial models motivating nonlocal conservation laws: A review of analytic and numerical results.
  \newblock \emph{Netw.~Heterog.~Media}, \textbf{11} {(2016)}, 49--67.

  
\bibitem{MR3217759} 
     \newblock    R.~M.~Colombo, E.~Rossi.
     \newblock On the micro-macro limit in traffic flow.
     \newblock \emph{Rend.~Semin.~Mat.~Univ.~Padova}, \textbf{131} (2014), 217--235.

%


 
\bibitem{CrippaLecureuxMercier2013}
  \newblock G.~Crippa, M.~L\'{e}cureux-Mercier.
  \newblock Existence and uniqueness of measure solutions for a system of continuity equations with nonlocal flow.
  \newblock \emph{NoDEA}, \textbf{20} {(2013)}, 523--537.


\bibitem{MR3541527} 
     \newblock  E.~Cristiani, S.~Sahu.
      \newblock On the micro-to-macro limit for first-order traffic flow models on
   networks,
     \newblock \emph{Netw.~Heterog.~Media}, \textbf{11} (2016), 395--413.     



\bibitem{DFFRR2017}
 \newblock M.~Di Francesco, S.~Fagioli, M.D.~Rosini, G.~Russo.
 \newblock Follow-the-Leader approximations of macroscopic models for vehicular and pedestrian flows. 
 \newblock \textit{Active particles}, Vol 1. (Bellomo, Degond, and Tadmor Eds.), 
 Birkh\"{a}user Basel, 333--378, (2017).
 


\bibitem{DFFR2019}
\newblock M.~Di Francesco, S.~Fagioli, E.~Radici.
 \newblock Deterministic particle approximation for nonlocal transport equations with nonlinear mobility.
 \newblock J.~Diff.~Eq., 266 (5), 2830--2868, (2019).

 
\bibitem{MR3356989} 
     \newblock  M.~Di Francesco, M.~D.~Rosini.
     \newblock Rigorous derivation of nonlinear scalar conservation laws from
   follow-the-leader type models via many particle limit.
     \newblock \emph{Arch.~Ration.~Mech.~Anal.}, \textbf{217} (2015), 831--871.


\bibitem{FS2019}
  \newblock M.~Di Francesco, G.~Stivaletta.
  \newblock Convergence of the follow-the-leader scheme for scalar conservation laws with space dependent flux.
  \newblock Preprint 2019.  ArXiv:1901.03618.



\bibitem{MR0477368} 
     \newblock R.~D.~Driver.
     \newblock \emph{Ordinary and delay differential equations}.
     \newblock \emph{Applied Mathematical Sciences}, \textbf{20} 
     \newblock Springer-Verlag, New York-Heidelberg, (1977).


\bibitem{MR0141863} 
     \newblock  R.~D.~Driver, M.~D.~Rosini.
     \newblock Existence and stability of solutions of a delay-differential system.
     \newblock \emph{Arch.~Ration.~Mech.~Anal.}, \textbf{10} (1962), 401--426.


\bibitem{DuKammLehoucqParks2012}
  \newblock Q.~Du, J.~R.~Kamm, R.~B.~Lehoucq, M.~L.~Parks.
    \newblock A new approach for a nonlocal, nonlinear conservation law.
  \newblock \emph{SIAM J.~Appl.~Math.}, \textbf{72} {(2012)}, 464--487.

%


\bibitem{FKG2018} 
\newblock    J.~Friedrich, O.~Kolb, S.~G\"{o}ttlich.
\newblock A Godunov type scheme for a class of LWR traffic flow models with non-local flux, \newblock \emph{Netw.~Heterog.~Media}, Vol. \textbf{13} (2018), pp. 531--547.


\bibitem{MR3605557} 
     \newblock  P.~Goatin, F.~Rossi.
      \newblock A traffic flow model with non-smooth metric interaction:
   well-posedness and micro-macro limit.
     \newblock \emph{Commun.~Math.~Sci.}, \textbf{15} (2017), 261--287.


%
%


\bibitem{HoldenRisebro} 
     \newblock  H.~Holden, N.~H.~Risebro.
     \newblock Continuum limit of Follow-the-Leader models -- a short proof,
     \newblock \emph{Discrete Contin.~Dyn.~Syst.} \textbf{38} (2018), no.~2, 715--722.

\bibitem{HoldenRisebro2}
  \newblock  H.~Holden,  N.~H.~Risebro. 
  \newblock Follow-the-Leader models can be viewed as a numerical approximation to the Lighthill-Whitham-Richards model for traffic flow. 
  \newblock \emph{Netw.~Heterog.~Media} \textbf{13} (2018), no.~3, 409--421.

%
%

\bibitem{Martin} 
\newblock R.~H.~Martin, Jr..
\newblock \textit{Nonlinear Operators and Differential Equations in Banach Spaces}.
\newblock Pure and Applied Mathematics, Wiley, New York, 1976.


%
%
     
 \bibitem{RidderShen2018}
  \newblock J.~Ridder, W.~Shen. 
  \newblock Traveling Waves for Nonlocal Models of Traffic Flow.
 \newblock  Accepted for publication in \textit{Discrete Contin.~Dyn.~Syst.--A)} 2019.   
 \newblock arXiv:1808.03734.
 
\bibitem{SellYou}	
\newblock G.~Sell, Y.~You.
\newblock \textit{Dynamics of Evolutionary Equations}.
\newblock Applied Mathematical Sciences, \textbf{143}, Springer-Verlag, New York, 2002.

%
	
\bibitem{ShenDDDE2017}
\newblock W.~Shen.
\newblock Traveling wave profiles for a Follow-the-Leader model for traffic flow  with rough road condition, 
\newblock  \textit{Netw. Heterog. Media},  \textbf{13}(2018), no.~3,  449--478.  


\bibitem{ShenTR}
\newblock W.~Shen.
\newblock Traveling Waves for Conservation Laws with Nonlocal Flux for Traffic Flow on 
Rough Roads,
\newblock Accepted for publication in \textit{Netw. Heterog. Media}, 2019.
\newblock ArXiv:1809.02998.


\bibitem{ShenKarim2017}
\newblock {W.~Shen and K.~Shikh-Khalil}.
 \newblock  {Traveling Waves for a Microscopic Model of Traffic Flow},
 \newblock  \textit{Discrete~Cont.~Dyn.~Syst.A}, \textbf{38} (2018), 2571--2589.
 
 
%

\bibitem{Whitham}
\newblock B.~Whitham.
\newblock \textit{Linear and nonlinear waves}.
\newblock Wiley \& Sons, New York, 1974. 



\bibitem{Zumbrun1999}
  \newblock K.~Zumbrun.
  \newblock On a nonlocal dispersive equation modeling particle suspensions,
  \newblock \emph{Q.~Appl.~Math.}, \textbf{57} {(1999)}, 573--600.

\end{thebibliography}
\end{document}